\documentclass[12pt]{amsproc}
\usepackage{amsmath,amsfonts,amssymb}

\usepackage{faktor}
\usepackage{color}
\usepackage{relsize}

\newtheorem{theorem}{\sc Theorem}
\newtheorem{lemma}[theorem]{\sc Lemma}
\newtheorem{proposition}[theorem]{\sc Proposition}
\newtheorem{corollary}[theorem]{\sc Corollary}

\usepackage{tikz}
\usepackage{tikz-cd}
%\usetikzlibrary{matrix}
%\usetikzlibrary{cd}

\begin{document}
\title{Automorphisms of $\kappa$-existentially closed groups}
\author{Burak KAYA}
\address{ Department of Mathematics, \\ Middle East Technical University,
 06800, Ankara, Turkey }
\email{burakk@metu.edu.tr}
\author{ Mahmut KUZUCUO\u{G}LU \\
 \today}
\address{ Department of Mathematics \\ Middle East Technical University,
 06800, Ankara, Turkey.   } \email{matmah@metu.edu.tr}

\keywords{Existentially closed groups, algebraically closed groups}
\subjclass{20B27, 20B07}

\thanks{This research project was partially supported by  Middle East Technical University Research Grant GAP-101-2018-2737, Ankara, Turkey.}

\dedicatory{Dedicated to Pavel Shumyatsky for his 60$^{\text{th}}$ birthday}

\maketitle

\begin{abstract} We investigate the automorphisms of some $\kappa$- existentially closed groups. In particular, we prove that $Aut(G)$ is the union of subgroups of level preserving automorphisms and $|Aut(G)|=2^{\kappa}$ whenever $\kappa$ is inaccessible and $G$ is the unique $\kappa$-existentially closed group of cardinality $\kappa$. Indeed, the latter result is a byproduct of an argument showing that, for any uncountable $\kappa$ and any group $G$ that is the limit of regular representation of length $\kappa$ with countable base, we have $|Aut(G)|=\beth_{\kappa+1}$, where $\beth$ is the beth function. Such groups are also $\kappa$-existentially closed if $\kappa$ is regular. Both results are obtained by an analysis and classification of level preserving automorphisms of such groups.
\end{abstract}

\section{Introduction}

Let $\kappa$ be an infinite cardinal. A group $G$ with $|G| \geq \kappa$ is said to be $\kappa$-\textit{existentially closed} if every system of less than $\kappa$-many equations and inequations with coefficients in $G$ which has a solution in some supergroup $H \geqslant G$ already has a solution in $G$. We denote the class of $\kappa$-existentially closed
 groups by $\mathcal{E}_{\kappa}$.
Obviously $\mathcal{E}_{\lambda}\subseteq \mathcal{E}_{\kappa}$ for all cardinals $\kappa \leq \lambda$ and $\mathcal{E}_{\mu}=\bigcap_{\kappa<\mu} \mathcal{E}_{\kappa}$ for all limit cardinals $\mu$.

The study of $\aleph_0$-existentially closed groups was initiated by W. R. Scott in \cite{Scott51}. Scott defined the notion of an $\aleph_0$-existentially closed group and its extensions to arbitrary uncountable cardinals. $\aleph_0$-existentially closed groups are well-studied, for example, see \cite{HigmanScott88}. On the other hand, not much attention has been paid to $\kappa$-existentially closed groups for uncountable $\kappa$ until recently. The principal properties of $\kappa$-existentially closed groups were studied in \cite{KK18} and the conditions for their existence were established in \cite{KKK18}.

While it may seem at first sight that the study of $\kappa$-existentially closed groups for uncountable $\kappa$ is a mere generalization, there are some essential differences between the countable and uncountable cases. For example, that there are uncountably many $\aleph_0$-existentially closed groups of cardinality $\aleph_0$ was proven by B. H. Neumann in \cite{Neumann73}. This follows from his result that there are uncountably many $2$-generated groups \cite[Theorem 14]{Neumann37} and the result of Scott \cite{Scott51} that every group is contained in an $\aleph_0$-existentially closed group. Unlike the countably infinite case, it is shown in \cite[Theorem 2.7]{KK18} that if there exists a $\kappa$-existentially closed group of cardinality $\kappa$ for an uncountable $\kappa$, then it is unique. We would like to note that uniqueness results can also be obtained for special subclasses for $\aleph_0$-existentially closed groups. In \cite[Theorem 9]{KKK18}, we stated that any two countable $\aleph_0$-existentially closed groups that have local systems consisting of simple finitely presented subgroups are isomorphic. One can generalize this argument to the groups that have local systems consisting of finitely absolutely presented subgroups which is introduced by Neumann in \cite{Neumann73}.

B. H. Neumann in  \cite[Page 555]{Neumann73} stated that ``however, no $\aleph_0$-existentially closed group is explicitly known, the existence proof being highly non-constructive. This stem in part from the fact that there is no useful criterion known that tells one what sentences are or are not consistent over a given group." An explicit $\kappa$-existentially closed group for uncountable regular $\kappa$ was constructed in \cite{KK18} as a limit of regular representations. This group is clearly $\aleph_0$-existentially closed as $\mathcal{E}_{\kappa} \subseteq \mathcal{E}_{\aleph_0}$.

We say that a group is \textit{the limit of regular representations of length $\kappa$ with base $G_0$} if it is the direct limit of the directed system $\{G_{\alpha}\}_{\alpha<\kappa}$ where \begin{itemize}
\item $G_{\alpha+1}=Sym(G_{\alpha})$ and $G_{\alpha} \hookrightarrow G_{\alpha+1}$ via its right regular representation for all $\alpha<\kappa$, and
\item $G_{\gamma}=\displaystyle \varinjlim_{\substack{\alpha<\gamma}} G_{\alpha}$ for all limit $\gamma < \kappa$.
\end{itemize}
To avoid trivialities, we assume that $|G_0| \geq 3$. For a study of such groups, we refer the reader to \cite{Kegel09}. We have the following fact.

\begin{theorem}\cite[Section 4]{KK18} If $\kappa$ is uncountable and regular, then any limit of regular representations of length $\kappa$ is $\kappa$-existentially closed.
\end{theorem}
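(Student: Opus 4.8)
The plan is to verify the existential-closure condition directly from the explicit description of $G=\varinjlim_{\alpha<\kappa}G_\alpha$. Suppose $\Sigma$ is a system of fewer than $\kappa$ equations and inequations with coefficients in $G$ that has a solution in some overgroup $H\geqslant G$; we must find a solution in $G$. Since each equation or inequation involves finitely many coefficients and finitely many variables, and $\kappa$ is regular and uncountable, the set of coefficients occurring in $\Sigma$ and the set $I$ of variables occurring in $\Sigma$ both have size $<\kappa$. As $\kappa$ is regular, all coefficients lie in a single $G_\beta$, and, using that $\kappa$ is uncountable, we may take $\omega\leq\beta<\kappa$, so that $G_\beta$ is infinite. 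Fix a solution $(h_i)_{i\in I}$ of $\Sigma$ in $H$ and put $K=\langle G_\beta,\ h_i : i\in I\rangle\leqslant H$, so $|K|\leq\max(|G_\beta|,|I|)<\kappa$. It suffices to produce, for some ordinal $\gamma$ with $\beta\leq\gamma<\kappa$, an embedding $\theta:K\hookrightarrow G_\gamma$ whose restriction to $G_\beta$ is the canonical map $G_\beta\to G_\gamma$ of the directed system: then $\theta$ fixes every coefficient of $\Sigma$ as an element of $G$, so $(\theta(h_i))_{i\in I}$ is a solution of $\Sigma$ lying in $G_\gamma\leqslant G$.

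To construct $\theta$, I would first pick $\beta'<\kappa$ with $|G_{\beta'}|>|K|$; this is possible because the cardinals $|G_\alpha|$ grow strictly along successor steps ($|G_{\alpha+1}|=2^{|G_\alpha|}$), so no cardinal below $\kappa$ bounds them all. Necessarily $\beta'>\beta$. Consider the canonical maps $G_\beta\to G_{\beta'}\to G_{\beta'+1}=Sym(G_{\beta'})$. Through its image in $G_{\beta'}$, the group $G_\beta$ acts on the set $G_{\beta'}$ by right multiplication; the orbits are the cosets of that image, each isomorphic to the right regular $G_\beta$-set, and there are $[G_{\beta'}:G_\beta]=|G_{\beta'}|$ of them (since $|G_{\beta'}|>|G_\beta|$). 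Thus, as a $G_\beta$-set, $G_{\beta'}$ is a disjoint union of $|G_{\beta'}|$ copies of the right regular $G_\beta$-set, while $K$, under right multiplication by $G_\beta$, is a disjoint union of $[K:G_\beta]\leq|K|<|G_{\beta'}|$ such copies.

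The crux is then an extension lemma, which I would establish separately: if $A$ is an infinite group, $\Omega$ is a disjoint union of $\lambda$ copies of the right regular $A$-set with $\lambda$ infinite, and $A\leqslant B$ with $[B:A]\leq\lambda$, then the $A$-action on $\Omega$ extends to a faithful action of $B$. To prove it one writes $\Omega$, as an $A$-set, in the form $B\ \sqcup\ (B\times\Lambda')$ for a suitable set $\Lambda'$ with $|\Lambda'|=\lambda$, where $A$ acts on $B$ and on the first coordinate of $B\times\Lambda'$ by right multiplication: the summand $B$ contributes $[B:A]$ copies of the right regular $A$-set, the summand $B\times\Lambda'$ contributes $[B:A]\cdot\lambda=\lambda$ of them, and $[B:A]+\lambda=\lambda$, so the orbit counts match and the $B$-action (right multiplication of $B$ on itself and on the first coordinate) transports along an $A$-isomorphism onto $\Omega$; faithfulness is witnessed by the $B$-summand. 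Applying this with $A=G_\beta$, $B=K$, $\Omega=G_{\beta'}$ and $\lambda=|G_{\beta'}|$ yields a faithful $K$-action on $G_{\beta'}$ extending the $G_\beta$-action, that is, an embedding $\theta:K\hookrightarrow Sym(G_{\beta'})=G_{\beta'+1}$ restricting on $G_\beta$ to the canonical map; taking $\gamma=\beta'+1$ completes the argument. I expect this extension lemma to be the main obstacle: the naive attempt to let $B$ act trivially on the ``extra'' orbits fails to extend the $A$-action, so the surplus room must instead be filled with copies of the right regular $B$-set, and one has to check that the cardinal arithmetic of orbit counts goes through uniformly.
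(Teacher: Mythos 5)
The paper offers no proof of this theorem (it is quoted from \cite[Section~4]{KK18}), but your argument is correct and complete, and it is essentially the argument used there (and, in the finite-level case, the classical one for Hall's universal locally finite group): bound the coefficients inside a single level $G_\beta$ and bound $|K|$ for $K=\langle G_\beta,\,h_i : i\in I\rangle$ below $\kappa$ using regularity and uncountability, move up to a level $G_{\beta'}$ with $|G_{\beta'}|>|K|$, and extend the free right-multiplication action of $G_\beta$ on $G_{\beta'}$ to a faithful $K$-action by matching orbit counts of free actions, which yields the required embedding $K\hookrightarrow Sym(G_{\beta'})=G_{\beta'+1}$ over $G_\beta$. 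Your extension lemma is the right key step, and the cardinal arithmetic supporting it (the existence of $\beta'$ with $|G_{\beta'}|>|K|$, the identity $[G_{\beta'}:G_\beta]=|G_{\beta'}|$, and $[K:G_\beta]+[K:G_\beta]\cdot|G_{\beta'}|=|G_{\beta'}|$) all checks out.
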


The aim of this paper is to investigate automorphisms of such $\kappa$-existentially closed groups. Before we proceed, let us make some notational remarks and explain some left-right multiplication conventions that are used in this paper. In what follows, the letters $\alpha,\beta,\delta,\dots$ will always denote ordinals and $\kappa,\lambda,\mu,\dots$ will denote infinite cardinals. For a group $G$, we define the conjugation map by $g \in G$ to be $\iota_g(x)=x^g=gxg^{-1}$ so that $\iota_{gh}(x)=(\iota_g \circ \iota_h)(x)$. We consider $Aut(G)$ as a group with the multiplication operation given by $(\varphi \boldsymbol{\cdot} \psi)(x)=(\varphi \circ \psi)(x)=\varphi(\psi(x))$.

We will first recall some basic facts on the ``local" behavior of automorphisms of $\kappa$-existentially closed groups that were implicitly contained in \cite{KK18}.

Let $H$ be an arbitrary group. An automorphism $\varphi \in Aut(H)$ is called $\kappa$-\textit{inner} if for every $X \subseteq H$ with $|X|<\kappa$,  there exists an element $g \in H$ such that $\varphi(x)=\iota_g(x)$ for all $x \in X$. Let $\kappa$-$Inn(H)$ denote the set of all $\kappa$-inner automorphisms of $H$. We clearly have $Inn(H) \unlhd \kappa$-$Inn(H) \unlhd Aut(H)$. Moreover, the inclusion on right is indeed an equality for $\kappa$-existentially closed groups.

\begin{proposition}\label{k-inner2} Let $\kappa$ be uncountable and let $G$ be $\kappa$-existentially closed. Then every automorphism of $G$ is $\kappa$-inner, i.e. $\kappa$-$Inn(G)=Aut(G)$.
\end{proposition}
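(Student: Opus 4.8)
The plan is to realize $\varphi$ as conjugation by a single element inside a suitable overgroup of $G$, and then to push the relevant relations — finitely or boundedly many at a time — back down into $G$ using $\kappa$-existential closedness. Concretely, first I would form the semidirect product $H = G \rtimes_\varphi \langle t \rangle$, where $\langle t \rangle$ is an infinite cyclic group acting on $G$ through $\varphi$, i.e.\ $txt^{-1} = \varphi(x)$ for all $x \in G$. Then $G$ is a subgroup of $H$, and the conjugation action $\iota_t$ restricted to $G$ is precisely $\varphi$; this exhibits $\varphi$ as the restriction to $G$ of an inner automorphism of the supergroup $H \geqslant G$.

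Now fix an arbitrary $X \subseteq G$ with $|X| < \kappa$ and consider the system of equations
\[
\Sigma_X = \{\, y\,x\,y^{-1} = \varphi(x) \;:\; x \in X \,\}
\]
in the single unknown $y$. This is a system of $|\Sigma_X| = |X| < \kappa$ equations, and every coefficient occurring in it — each $x$ and each $\varphi(x)$ — lies in $G$, so $\Sigma_X$ is a legitimate system of the kind appearing in the definition of $\mathcal{E}_\kappa$. It has the solution $y = t$ in $H$, hence, since $G \in \mathcal{E}_\kappa$, it already has a solution $y = g \in G$. For this $g$ we get $\iota_g(x) = g x g^{-1} = \varphi(x)$ for every $x \in X$. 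As $X$ was an arbitrary subset of $G$ of size $< \kappa$, this shows $\varphi$ is $\kappa$-inner; and since $\varphi \in Aut(G)$ was arbitrary, while $\kappa$-$Inn(G) \unlhd Aut(G)$ holds for every group, we conclude $\kappa$-$Inn(G) = Aut(G)$.

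As for where the difficulty lies: there is essentially no obstacle once one thinks to pass to an overgroup in which $\varphi$ becomes inner. The only point meriting a moment's care is the bookkeeping in the second step, namely verifying that $\Sigma_X$ really is an admissible system — that the number of equations is $< \kappa$ and, crucially, that every coefficient belongs to $G$ and not merely to $H$. One could equally well use the HNN extension $\langle G, t \mid t x t^{-1} = \varphi(x),\ x \in G\rangle$ in place of the semidirect product, but the semidirect product has the advantage that the inclusion $G \hookrightarrow H$ is completely transparent and requires no appeal to Britton's lemma.
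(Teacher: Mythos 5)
Your proof is correct, but it is self-contained where the paper's is not: the paper disposes of this proposition in one line by citing \cite[Lemma 2.4]{KK18}, which asserts that any isomorphism between two subgroups of $G$ of cardinality less than $\kappa$ is induced by conjugation by an element of $G$; one then applies that lemma to $\varphi \upharpoonright \langle X \rangle$, using the uncountability of $\kappa$ to guarantee $|\langle X \rangle| \leq |X|+\aleph_0 < \kappa$. Your argument instead inlines the mechanism that presumably underlies that lemma, and because you are dealing with a globally defined automorphism rather than an isomorphism between two subgroups, you can replace the HNN extension by the simpler semidirect product $G \rtimes_{\varphi} \mathbb{Z}$, exactly as you observe. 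What your route buys is independence from the external reference and a weaker conclusion obtained more cheaply: you get agreement of $\varphi$ with $\iota_g$ on the set $X$ only, whereas the paper's route yields agreement on all of $\langle X \rangle$. What it costs is a small tension you should be aware of: your argument nowhere uses that $\kappa$ is uncountable, yet the paper remarks immediately afterwards that the proposition fails for $\kappa=\aleph_0$ and only survives there if $X$ is required to be a finite subgroup. The uncountability hypothesis is genuinely needed for the paper's derivation (to keep $\langle X \rangle$ small), so you should either flag explicitly that your version of the statement is only about pointwise agreement on $X$, or double-check the solvability-in-$G$ step against the precise form of existential closedness being used, since as written your proof proves a strictly stronger-looking claim than the one the authors are willing to assert.
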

\begin{proof} This easily follows from \cite[Lemma 2.4]{KK18}.
\end{proof}

\begin{proposition}\label{k-inner5} Let $\kappa$ be uncountable and let $G$ be a $\kappa$-existentially closed group. Then every automorphism of $H \leqslant G$ with $|H|<\kappa$ can be extended to an automorphism of $G$.
\end{proposition}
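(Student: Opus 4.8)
The plan is to realise $\varphi$ as a conjugation inside a suitable overgroup of $G$ and then to pull this solution back into $G$ itself by invoking $\kappa$-existential closedness. In fact this strategy will yield the stronger conclusion that $\varphi$ extends to an \emph{inner} automorphism of $G$.

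First I would pass to the HNN extension
\[
G^{\ast}=\langle\, G,\ t \ \mid\ t\,h\,t^{-1}=\varphi(h) \text{ for all } h\in H\,\rangle .
\]
Since $\varphi\in Aut(H)$, the map $\varphi$ is an isomorphism between the subgroups $H$ and $\varphi(H)=H$ of $G$, so this is a legitimate HNN extension, and by the normal form theorem for HNN extensions (Britton's lemma) the group $G$ embeds into $G^{\ast}$. By construction the stable letter $t\in G^{\ast}$ satisfies $\iota_{t}(h)=t\,h\,t^{-1}=\varphi(h)$ for every $h\in H$.

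Next I would view the family of equalities $y\,h\,y^{-1}=\varphi(h)$, indexed by $h\in H$, as a system of equations in the single unknown $y$. Every coefficient occurring in it, namely each $h$ and each $\varphi(h)$, lies in $H\leqslant G$, and there are $|H|<\kappa$ of these equations, so this is a system of fewer than $\kappa$ equations with coefficients in $G$. It has the solution $y=t$ in the supergroup $G^{\ast}\geqslant G$, so, $G$ being $\kappa$-existentially closed, it already has a solution $g\in G$; that is, $\iota_{g}(h)=\varphi(h)$ for all $h\in H$. Then $\iota_{g}\in Inn(G)\leqslant Aut(G)$ is an automorphism of $G$ restricting to $\varphi$ on $H$, which is what we want.

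I do not expect a genuine obstacle here: the statement is a direct consequence of the definition of $\kappa$-existential closedness once the correct overgroup has been produced. The only points that need (routine) attention are the cardinality bookkeeping — checking that the displayed system really consists of fewer than $\kappa$ equations all of whose coefficients lie in $G$ — and the appeal to the classical embedding theorem for HNN extensions, which is exactly what makes $G^{\ast}$ a legitimate supergroup of $G$ witnessing solvability of the system.
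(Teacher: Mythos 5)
Your proof is correct and is essentially the paper's argument with the black box opened: the paper derives the statement from the fact (Proposition \ref{k-inner2}, resting on \cite[Lemma 2.4]{KK18}) that isomorphic subgroups of $G$ of cardinality less than $\kappa$ are conjugate, so that $\varphi$ is the restriction of some inner automorphism $\iota_g$, and your HNN-extension-plus-closedness argument is precisely the standard proof of that conjugacy fact specialized to $H$ and $\varphi(H)=H$. Both routes yield the same stronger conclusion, namely that $\varphi$ extends to an \emph{inner} automorphism of $G$.
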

\begin{proof} This is an immediate consequence of Proposition \ref{k-inner2}.
\end{proof}

We wish to note that Proposition \ref{k-inner2} fails for $\kappa=\aleph_0$. Nevertheless, if one requires the set $X$ in the definition of $\kappa$-innerness to be a subgroup and not just a subset, then this facts also holds for $\aleph_0$-existentially closed groups. More specifically, we have the following fact which is a consequence of \cite[Lemma 2.4]{KK18}.

\begin{proposition} Let $G$ be an $\aleph_0$-existentially closed group.
Then for every $\varphi \in Aut(G)$ and for every finite subgroup $A \leqslant G$, there exists an element $g \in G$ such that $\varphi(a)=a^g$ for all $a\in A$.
\end{proposition}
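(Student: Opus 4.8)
The plan is to realize $g$ as a solution in $G$ of a finite system of equations with coefficients in $G$, invoking $\aleph_0$-existential closedness, after exhibiting one overgroup of $G$ in which such a solution is visible. Enumerate $A=\{a_1,\dots,a_n\}$ and consider the system of equations
\[
x\,a_i\,x^{-1}\,\varphi(a_i)^{-1}=1,\qquad i=1,\dots,n,
\]
in the single unknown $x$, all of whose coefficients $a_1,\dots,a_n,\varphi(a_1),\dots,\varphi(a_n)$ lie in $G$. Since $A$ is finite this is a system of fewer than $\aleph_0$ equations, so by the definition of $\aleph_0$-existential closedness it has a solution in $G$ as soon as it has a solution in some group $H$ with $G\leqslant H$.

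For $H$ I would take the subgroup $G\rtimes\langle\varphi\rangle$ of the holomorph $G\rtimes Aut(G)$, equivalently the mapping torus $\langle\,G,t\mid t\,y\,t^{-1}=\varphi(y),\ y\in G\,\rangle$ of $\varphi$. Then $G$ embeds into $H$, and there is an element $t\in H$ with $t\,y\,t^{-1}=\varphi(y)$ for every $y\in G$; in particular $t\,a_i\,t^{-1}=\varphi(a_i)$ for $i=1,\dots,n$, so $x=t$ solves the system in $H$. Consequently the system has a solution $g\in G$, and for this $g$ we get $a^g=g\,a\,g^{-1}=\varphi(a)$ for every $a\in A$, which is the assertion. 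This is precisely what one obtains by feeding the finite subgroup $A\leqslant G$ and the external element $t$ (which conjugates $A$ exactly as $\varphi$ does) into \cite[Lemma 2.4]{KK18}.

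I do not anticipate a substantial obstacle: the argument is essentially a repackaging of $\aleph_0$-existential closedness together with the elementary fact that every automorphism of a group is the restriction of an inner automorphism of a suitable overgroup. The two points to check are routine — that $G$ indeed embeds into $G\rtimes\langle\varphi\rangle$ with some element inducing $\varphi$ by conjugation, and that the displayed system consists of fewer than $\aleph_0$ equations (which is where finiteness of $A$ is used). The very same argument, with a subgroup $A\leqslant G$ of cardinality $<\kappa$ in place of a finite one, recovers Proposition \ref{k-inner2} for uncountable $\kappa$.
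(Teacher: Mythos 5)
Your argument is correct and is essentially the paper's own route: the paper simply invokes \cite[Lemma 2.4]{KK18} applied to the isomorphic finite subgroups $A$ and $\varphi[A]$, and your proof just unfolds that lemma's underlying mechanism --- exhibit an overgroup (your $G\rtimes\langle\varphi\rangle$, or an HNN extension over $A$) in which the conjugator exists, then pull the solution of the resulting finite system of equations down into $G$ by $\aleph_0$-existential closedness. The only point worth a second look is your closing claim that the same argument works verbatim for finite subsets and for arbitrary $X$ with $|X|<\kappa$: that is fine as far as it goes, but note the authors explicitly remark that the subset version of Proposition~\ref{k-inner2} fails at $\kappa=\aleph_0$, so you should reconcile your stronger conclusion with that remark rather than assert it in passing.
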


It turns out that embeddings of small groups into a $\kappa$-existentially closed group can be extended to embeddings of their small supergroups.

\begin{lemma}\label{k-inner4}
Let $G$ be a $\kappa$-existentially closed group.
Let $H\leq K$ be groups with $|K|<\kappa$. Then for every embedding $\varphi: H \rightarrow G$, there exists an embedding $\overline{\varphi}: K \rightarrow G$  such that $\overline{\varphi} \upharpoonright H=\varphi$.
\end{lemma}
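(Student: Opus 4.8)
The plan is to realize $K$ inside a supergroup of $G$ by an amalgamated free product, and then to transcribe the statement ``the copy of $K$ sits over $\varphi(H)$'' into a system of fewer than $\kappa$ equations and inequations with coefficients in $G$, which $\kappa$-existential closedness then solves inside $G$ itself.

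\textbf{Step 1: a supergroup of $G$ carrying a compatible copy of $K$.} Identify $H$ with $\varphi(H) \leqslant G$ via $\varphi$, so that $G$ and $K$ are two groups sharing the common subgroup $H$, and form the amalgamated free product $P = G \ast_H K$. By the normal form theorem for amalgamated free products, the canonical maps $G \to P$ and $K \to P$ are both injective and their images meet exactly in (the image of) $H$. Thus $P \geqslant G$, and $P$ contains a copy of $K$ whose restriction to $H$ agrees with $\varphi$ under the above identification.

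\textbf{Step 2: the system, and its solution in $P$.} Enumerate $K = \{k_\xi : \xi < \lambda\}$ with $\lambda = |K| < \kappa$ and $k_0 = 1$, and take one variable $X_\xi$ for each $\xi$. Impose: the equations $X_\xi X_\eta = X_\zeta$ for every triple with $k_\xi k_\eta = k_\zeta$ in $K$ (the multiplication table of $K$); the equations $X_\xi = \varphi(k_\xi)$ whenever $k_\xi \in H$ (these carry the coefficients from $G$); and the inequations $X_\xi \neq X_\eta$ for $\xi \neq \eta$. There are at most $|K|^2 < \kappa$ equations and inequations altogether. The assignment $X_\xi \mapsto k_\xi \in K \leqslant P$ is a solution in the supergroup $P$: the first family holds because it holds in $K$, the second because the embedded copy of $K$ restricts to $\varphi$ on $H$, and the inequations hold because $K \hookrightarrow P$.

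\textbf{Step 3: pulling the solution back into $G$.} Since $G$ is $\kappa$-existentially closed and the system has fewer than $\kappa$ equations and inequations with a solution in the supergroup $P \geqslant G$, it has a solution $(g_\xi)_{\xi<\lambda}$ in $G$. Setting $\overline{\varphi}(k_\xi) = g_\xi$: the multiplication-table equations make $\overline{\varphi}$ a homomorphism, the inequations make it injective, and the equations $X_\xi = \varphi(k_\xi)$ for $k_\xi \in H$ give $\overline{\varphi} \upharpoonright H = \varphi$, so $\overline{\varphi}$ is the desired embedding. The only non-formal ingredient is the embedding property of the amalgamated free product in Step 1 (any construction yielding a supergroup of $G$ with a copy of $K$ meeting $G$ in $\varphi(H)$ would do equally well); everything else is bookkeeping, the essential point being that $|K| < \kappa$ keeps the size of the system below $\kappa$.
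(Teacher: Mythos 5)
Your proof is correct, but it takes a genuinely different route from the paper's. The paper does not unwind the definition of $\kappa$-existential closedness at all: it picks some embedding $\theta\colon K\to G$ (citing the fact that every group of order less than $\kappa$ embeds into $G$), observes that $\theta(H)$ and $\varphi(H)$ are isomorphic subgroups of $G$ of order less than $\kappa$, and then invokes the result that any isomorphism between such small subgroups is induced by conjugation by some $t\in G$; the extension is simply $\overline{\varphi}=\iota_t\circ\theta$. You instead argue from first principles: you build the supergroup $G\ast_H K$ by amalgamation, encode the multiplication table of $K$ together with the identification on $H$ as a system of fewer than $\kappa$ equations and inequations with coefficients in $G$, and let existential closedness pull the solution into $G$. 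Your argument is self-contained modulo the normal form theorem for amalgamated free products, and it is essentially the standard template from which the paper's two black-box lemmas are themselves proved, so in effect you have inlined their proofs. What the paper's route buys is brevity and a slightly more structured conclusion (the extension is a conjugate of a fixed embedding of $K$); what yours buys is independence from the cited lemmas and a transparent view of where the hypothesis $|K|<\kappa$ enters, namely in keeping the size of the system below $\kappa$. Both proofs are complete and correct.
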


\begin{proof}  Let $\varphi: H \rightarrow G$ be an embedding. Pick an embedding $\theta: K \rightarrow G$, which exists by \cite[Lemma 2.1]{KK18}. Then $\varphi(H)$ and $\theta(H)$ are isomorphic subgroups of $G$ of order less than $\kappa$. Hence, by \cite[Lemma 2.4]{KK18}, there exists an element $t\in G$ satisfying $$(\theta(h))^{t}=\varphi(h)$$ for all $h \in H$. Then the map $\overline{\varphi}: K \rightarrow G$ given by $\overline{\varphi}(x)=(\iota_t \circ \theta)(x)$ is an extension of $\varphi: H \rightarrow G$.
\end{proof}

Using this lemma, one can obtain $\kappa$-existentially closed groups as limits of directed systems consisting of groups that contain arbitrarily large infinite symmetric groups. To demonstrate this, suppose that $\kappa$ is inaccessible and let $G$ be a limit of regular representations of length $\kappa$ with countable base $G_0$. In this case, $G$ is the unique $\kappa$-existentially closed group of cardinality $\kappa$. We shall now obtain this group as a limit of general linear groups.

Let $F$ be a fixed field with $|F|<\kappa$. Recall that the right regular representation of a group $H$ with $|H|=\mu$ can be seen as a subgroup $GL(\mu, F)$. Consequently, for each $\alpha < \kappa$, by Lemma \ref{k-inner4}, we can find an embedding $\overline{\varphi_{\alpha}}$ such that
\[\begin{tikzcd}
G_{\alpha} \arrow[r, hook] \arrow[dr, hook]
& GL(|G_{\alpha}|,F) \arrow[d, "\overline{\varphi_{\alpha}}"]\\
& G
\end{tikzcd}
\]
commutes. Since $|GL(|G_{\alpha}|,F)|=|Sym(G_{\alpha})|=|G_{\alpha+1}|<\kappa$, we can find some $G_{\alpha'}$ containing the image of $\overline{\varphi_{\alpha}}$. Then the same procedure can be applied to $G_{\alpha'}$. Repeating this procedure transfinitely along $\alpha<\kappa$ by taking direct limit at limit stages, one can obtain the following.

\begin{corollary}\label{projective-linear} Let $G$ be a $\kappa$-existentially closed group of cardinality $\kappa$ where $\kappa$ is inaccessible. Then $G$ is the direct limit of some directed system consisting of $GL(\mu_{\alpha},F)$'s for non-limit $\alpha<\kappa$ where $\mu_{\alpha}<\kappa$ are cardinals with $\kappa=\sup\{\mu_\alpha:\alpha<\kappa\}$.
\end{corollary}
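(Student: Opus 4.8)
The plan is to carry out explicitly the transfinite recursion indicated just above the statement, with Lemma~\ref{k-inner4} as the sole nontrivial ingredient. First I would fix a convenient copy of $G$. Since $\kappa$ is inaccessible it is a strong limit, so the limit $G=\varinjlim_{\alpha<\kappa}G_\alpha$ of regular representations of length $\kappa$ with countable base has cardinality $\sup_{\alpha<\kappa}|G_\alpha|=\kappa$ (here the $|G_\alpha|$ are the beth numbers $\beth_\alpha$, all below the strong limit $\kappa$, with $\sup_{\alpha<\kappa}\beth_\alpha=\beth_\kappa=\kappa$), and it is $\kappa$-existentially closed by the theorem quoted at the start of the paper; by the uniqueness of $\kappa$-existentially closed groups of cardinality $\kappa$ (\cite[Theorem~2.7]{KK18}) we may assume $G$ is of this form. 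Fix once and for all a field $F$ with $|F|<\kappa$. Two features of inaccessibility will be used: (i) $|GL(\mu,F)|<\kappa$ whenever $\mu<\kappa$, because $\kappa$ is a strong limit; and (ii) every subset of $G=\bigcup_{\alpha<\kappa}G_\alpha$ of size $<\kappa$ lies inside a single $G_\alpha$, because $\kappa$ is regular. Recall finally that the right regular representation of a group of size $\mu$ embeds into $Sym(\mu)$, hence into $GL(\mu,F)$ via permutation matrices; in particular each $G_\alpha$ embeds into $GL(|G_\alpha|,F)$.

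Next I would construct, by recursion on $\xi<\kappa$, ordinals $\alpha_\xi<\kappa$ and subgroups $L_\xi\leqslant G$ as follows. Set $\alpha_0=0$; for a limit $\xi$ put $\alpha_\xi=\sup_{\eta<\xi}\alpha_\eta$, which is $<\kappa$ by regularity; and at a successor $\xi=\eta+1$ let $\alpha_\xi$ be any ordinal $<\kappa$ with $\alpha_\xi>\alpha_\eta$ and $L_\eta\subseteq G_{\alpha_\xi}$, which exists by (ii). Having $\alpha_\xi$, apply Lemma~\ref{k-inner4} with $H=G_{\alpha_\xi}$, $K=GL(|G_{\alpha_\xi}|,F)$ (so $|K|<\kappa$) and $\varphi$ the inclusion $G_{\alpha_\xi}\hookrightarrow G$: this yields an embedding $\overline{\varphi}\colon K\to G$ with $\overline{\varphi}\upharpoonright H=\varphi$, and I put $L_\xi=\overline{\varphi}(K)$. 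Then $L_\xi\cong GL(|G_{\alpha_\xi}|,F)$ and $G_{\alpha_\xi}=\overline{\varphi}(G_{\alpha_\xi})\leqslant L_\xi$.

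By construction $L_\xi\subseteq G_{\alpha_{\xi+1}}\subseteq L_{\xi+1}$, so $\langle L_\xi\rangle_{\xi<\kappa}$ is an increasing chain, and $\bigcup_{\xi<\kappa}L_\xi=\bigcup_{\xi<\kappa}G_{\alpha_\xi}=\bigcup_{\alpha<\kappa}G_\alpha=G$, since $\langle\alpha_\xi\rangle$ is strictly increasing of length $\kappa$ and hence cofinal in the regular cardinal $\kappa$. Thus $G=\varinjlim_{\xi<\kappa}L_\xi$ is the direct limit of a chain of copies of $GL(\mu_\xi,F)$ with $\mu_\xi=|G_{\alpha_\xi}|<\kappa$. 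To obtain the continuous directed system of the statement, I would set $K_\alpha=L_\alpha$ for non-limit $\alpha$ and $K_\gamma=\bigcup_{\eta<\gamma}L_\eta$ for limit $\gamma$; one checks $\bigcup_{\eta<\gamma}L_\eta\subseteq G_{\alpha_\gamma}\subseteq L_\gamma$, so $\langle K_\alpha\rangle_{\alpha<\kappa}$ is still an increasing continuous chain with $\bigcup_\alpha K_\alpha=G$, the term $K_\alpha$ for non-limit $\alpha$ is a copy of $GL(\mu_\alpha,F)$, and $\sup\{\mu_\alpha:\alpha<\kappa\}=\sup_{\alpha<\kappa}|G_\alpha|=\kappa$ because the cardinals $|G_\alpha|$ are strictly increasing, hence unbounded below $\kappa$. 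This is precisely the assertion.

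I do not expect a genuine obstacle here: the argument is just an organized iteration of Lemma~\ref{k-inner4}, and the only points that demand care are the two set-theoretic consequences of inaccessibility recorded as (i) and (ii), together with the verification that the cardinals $\mu_\xi$ remain cofinal in $\kappa$ — the latter being exactly what guarantees that the resulting directed system has $G$ itself, rather than merely a proper subgroup, as its direct limit.
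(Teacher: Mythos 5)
Your proposal is correct and follows essentially the same route as the paper: fix $G$ as a limit of regular representations with countable base via uniqueness, use Lemma~\ref{k-inner4} to extend each inclusion $G_{\alpha}\hookrightarrow G$ to an embedding of $GL(|G_{\alpha}|,F)$, note that the image has size $<\kappa$ and so lands in a later level, and iterate transfinitely taking unions at limit stages. You have merely written out explicitly the bookkeeping (the interleaving chain $G_{\alpha_\xi}\leqslant L_\xi\leqslant G_{\alpha_{\xi+1}}$ and the cofinality check) that the paper leaves as a sketch.
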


One can replace $GL(\mu,F)$ by $PGL(\mu,F)$ since it also embeds the right regular representation of a group of cardinality $\mu$.

Before we conclude this section, we wish record the following fact for $\kappa$-existentially closed groups.

\begin{proposition}\label{relations} Let $G$ be a $\kappa$-existentially closed group whose set of relations has cardinality $\mu$. Then $\mu \geq \kappa$.
\end{proposition}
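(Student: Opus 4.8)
The plan is to argue by contradiction: assume $G$ is $\kappa$-existentially closed and fix a presentation $G=\langle X\mid R\rangle$ with $|R|=\mu<\kappa$; the goal is to bound $|G|$ from above and contradict $|G|\ge\kappa$. The first ingredient is that every $\kappa$-existentially closed group is perfect, i.e. $G=[G,G]$. This is immediate from the definition: given $g\in G$, one can embed $G$ into a group in which $g$ becomes a commutator --- e.g. amalgamate $G$ with a suitable wreath product $C\wr C_2$ over $\langle g\rangle$ (with $C$ cyclic of the same order as $g$, taken infinite cyclic if $g$ has infinite order, and $g$ identified with $(c,c^{-1})=[(c,1),\tau]$), exploiting that such elements are commutators there --- so the single equation $[x,y]=g$ with coefficient $g$ is solvable in a supergroup of $G$, hence in $G$ itself. (One may instead simply cite that $\kappa$-existentially closed groups are simple.)

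Next I would pass to abelianizations. Since $G^{\mathrm{ab}}\cong\mathbb{Z}^{(X)}/\langle\overline R\rangle$, where $\overline R$ denotes the images of the relators and inside the abelian group $\mathbb{Z}^{(X)}$ the normal closure of $\overline R$ is just $\langle\overline R\rangle$, perfectness of $G$ forces $\langle\overline R\rangle=\mathbb{Z}^{(X)}$. Thus the free abelian group $\mathbb{Z}^{(X)}$ is generated by at most $\mu$ elements; inspecting the finite supports of such generators shows this is impossible when $\mu$ is infinite unless $|X|\le\mu$, and forces $|X|$ finite when $\mu$ is finite. In either case $|G|\le|F(X)|=\max(|X|,\aleph_0)\le\max(\mu,\aleph_0)$ (here $X\ne\varnothing$, since otherwise $G$ would be trivial). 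If $\kappa$ is uncountable, then $\kappa\le|G|\le\max(\mu,\aleph_0)$ immediately gives $\mu\ge\kappa$, completing the proof in this case.

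It remains to handle $\kappa=\aleph_0$, where the displayed inequality is vacuous and one must separately exclude the subcase $\mu$ finite, that is, rule out that $G$ is finitely presented. Here I would invoke the known fact that no $\aleph_0$-existentially closed group is finitely presented (in fact none is finitely generated), referring to \cite{HigmanScott88}. I expect this last case to be the genuine obstacle: the uncountable case reduces to essentially cardinal arithmetic once perfectness is available, whereas $\kappa=\aleph_0$ requires a separate, non-elementary structural fact about $\aleph_0$-existentially closed groups rather than anything obtainable from the lemmas recalled above.
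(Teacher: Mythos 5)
Your proposal is correct, but it takes a genuinely different route from the paper. The paper argues directly from the hypothesis $\mu<\kappa$: the generators not occurring in any relator form a set $S$ of size $\geq\kappa$, so $G$ splits as a free product $\langle T\rangle * \langle S\rangle$ with $\langle S\rangle$ free of rank $\geq\kappa$; a subset $A\subseteq S$ with $2\leq |A|<\kappa$ then has trivial centralizer in $G$, contradicting the fact that small subgroups of a $\kappa$-existentially closed group have nontrivial centralizers (\cite[Lemma 3.5]{KK18}). You instead use perfectness (via simplicity, or your wreath-product amalgamation, which is fine): the relators must generate $F(X)^{\mathrm{ab}}=\mathbb{Z}^{(X)}$, forcing $|X|\leq\max(\mu,\aleph_0)$ and hence $|G|\leq\max(\mu,\aleph_0)$, which settles every uncountable $\kappa$ by cardinality comparison alone. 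This buys generality --- your argument really shows that any uncountable perfect group needs at least $|G|$ relations, with no existential closure used beyond simplicity --- at the price of the case $\kappa=\aleph_0$, which you correctly isolate and dispatch by citing that no $\aleph_0$-existentially closed group is finitely generated. That imported fact is standard and citable from \cite{HigmanScott88}, and it is worth noting that it follows from the very tool the paper uses: a finitely generated $\aleph_0$-existentially closed group would have nontrivial centre by the centralizer lemma, contradicting simplicity (or centerlessness). So the two proofs reconverge exactly where yours needs outside help; the paper's single free-product-plus-centralizer argument is uniform in $\kappa$, while yours is more elementary and more general in the uncountable case.
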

\begin{proof} Assume to the contrary that $\mu < \kappa$. Set $S$ to be the set of elements of $G$ which do not appear in any of the relations of $G$ and let $T=G-S$. Then $\langle S \rangle$ is a free subgroup of $G$ of rank $|S| \geq \kappa$ and $\langle T \rangle$ is of cardinality $\leq \mu$. Moreover $G$ is a free product of  $\langle T \rangle$ and $\langle S \rangle$. But, for any subset $A\subseteq  S$ with $2\leq | A |<\kappa $, the centralizer of $A$ in the free group $\langle S \rangle$ is trivial and hence the centralizer in the free product is trivial. But this is impossible, because in a $\kappa$-existentially closed group $G$ any subgroup of order less than $\kappa$ has a non-trivial centralizer in $G$, see \cite[Lemma 3.5]{KK18}.
\end{proof}

\section{Level preserving automorphisms}

In this section, we will focus on the ``global" behavior of automorphisms of $\kappa$-existentially closed groups that are limit of regular representations. To set the scene, let us start with weaker assumptions.

Through the rest of the paper, suppose that $G$ is a group of cardinality $\kappa$ such that $G=\bigcup_{\alpha < \kappa} G_{\alpha}$ for some sequence of groups $(G_{\alpha})_{\alpha < \kappa}$ with 
\begin{itemize}
\item $G_{\alpha} \leqslant G_{\beta}$ for all $\alpha < \beta < \kappa$,
\item $G_{\gamma}=\cup_{\alpha < \gamma} G_{\alpha}$ for limit $\gamma<\kappa$,\ \ \ \ \ \ \ \ \ \ \ \ \ \ $\mathlarger{\mathlarger{\mathlarger{(\ast)}}}$
\item $|G_{\alpha}|<\kappa$ for all $\alpha < \kappa$.
\end{itemize}
We wish to understand the automorphisms of $G$. To this end, we begin by noting the following simple but crucial fact which roughly states that every automorphism of $G$ set-wise fixes a ``big" set of levels.

\begin{lemma}\label{fixedisclub} Suppose that $\kappa$ is uncountable and regular. For every $\varphi \in Aut(G)$, we have that $$Stab(\varphi)=\{\alpha<\kappa: \varphi[G_{\alpha}]=G_{\alpha}\}$$ is a club (i.e. closed and unbounded) subset of $\kappa$.
\end{lemma}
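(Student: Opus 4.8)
The plan is to show that $Stab(\varphi)$ is closed and unbounded separately, the unboundedness being the part that actually uses the hypothesis $|G_\alpha| < \kappa$ together with regularity of $\kappa$.

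\textbf{Closedness.} Suppose $\gamma < \kappa$ is a limit ordinal which is a limit point of $Stab(\varphi)$, i.e. $\sup(Stab(\varphi) \cap \gamma) = \gamma$. Pick an increasing sequence $(\alpha_i)_{i}$ of elements of $Stab(\varphi)$ cofinal in $\gamma$. Then by the continuity clause of $(\ast)$ we have $G_\gamma = \bigcup_i G_{\alpha_i}$, and since $\varphi[G_{\alpha_i}] = G_{\alpha_i}$ for each $i$, we get $\varphi[G_\gamma] = \bigcup_i \varphi[G_{\alpha_i}] = \bigcup_i G_{\alpha_i} = G_\gamma$, so $\gamma \in Stab(\varphi)$. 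This step is essentially immediate and uses only that $\varphi$ commutes with unions and that the filtration is continuous.

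\textbf{Unboundedness.} This is the main point. Fix $\alpha_0 < \kappa$; I want to find $\beta \in Stab(\varphi)$ with $\beta \geq \alpha_0$. I would build an increasing sequence $\alpha_0 \leq \alpha_1 \leq \alpha_2 \leq \cdots$ of ordinals below $\kappa$ by a back-and-forth/closing-off construction: given $\alpha_n$, since $|G_{\alpha_n}| < \kappa$ and each element of $\varphi[G_{\alpha_n}]$ (as well as of $\varphi^{-1}[G_{\alpha_n}]$) lies in some $G_\delta$, and $\kappa$ is regular, the set $\varphi[G_{\alpha_n}] \cup \varphi^{-1}[G_{\alpha_n}]$ is contained in $G_{\alpha_{n+1}}$ for some $\alpha_{n+1} < \kappa$ with $\alpha_{n+1} > \alpha_n$; here regularity is what guarantees a single bound $\alpha_{n+1}$ works for a set of size $< \kappa$. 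Now set $\beta = \sup_n \alpha_n < \kappa$ (again finite cofinality, so $\beta < \kappa$), so that $G_\beta = \bigcup_n G_{\alpha_n}$ by continuity. Then $\varphi[G_\beta] = \bigcup_n \varphi[G_{\alpha_n}] \subseteq \bigcup_n G_{\alpha_{n+1}} = G_\beta$, and symmetrically $\varphi^{-1}[G_\beta] \subseteq G_\beta$, i.e. $G_\beta \subseteq \varphi[G_\beta]$. Hence $\varphi[G_\beta] = G_\beta$ and $\beta \in Stab(\varphi)$ with $\beta \geq \alpha_0$, proving unboundedness.

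\textbf{Expected obstacle.} There is no deep obstacle here; the only thing one must be careful about is the correct bookkeeping of the two directions ($\varphi$ and $\varphi^{-1}$) in the closing-off step — handling only $\varphi$ would give $\varphi[G_\beta] \subseteq G_\beta$ but not equality — and making sure every invocation of "a set of size $<\kappa$ is bounded below $\kappa$" is justified by the regularity of $\kappa$ (this is also where uncountability enters, ensuring $\omega < \kappa$ so that the $\sup$ over the countable chain stays below $\kappa$). Once these are in place the argument is a routine continuity-and-cofinality computation.
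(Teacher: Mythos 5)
Your proof is correct and follows essentially the same route as the paper's: a closing-off $\omega$-chain using regularity of $\kappa$ to bound $\varphi[G_{\alpha_n}]\cup\varphi^{-1}[G_{\alpha_n}]$ for unboundedness, and continuity of the filtration at limits for closedness. No gaps; your remark that one must track both $\varphi$ and $\varphi^{-1}$ to get equality rather than mere inclusion is exactly the right point of care.
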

\begin{proof} Let $\varphi \in Aut(G)$. We first show that $Stab(\varphi)$ is unbounded in $\kappa$. Let $\alpha < \kappa$. We recursively construct an increasing sequence $(\alpha_n)_{n < \omega}$ of ordinals below $\kappa$ as follows. Set $\alpha_0=\alpha$. Suppose that $\alpha_n$ has already been constructed. Let $\alpha_{n+1}$ be any $\gamma$ greater than $\alpha_n$ such that $$\varphi[G_{\alpha_n}] \cup \varphi^{-1}[G_{\alpha_n}] \subseteq G_{\gamma}$$ Observe that, if there were no such $\gamma<\kappa$, then we would have \[cf(\kappa) \leq \big|\varphi[G_{\alpha_n}] \cup \varphi^{-1}[G_{\alpha_n}]\big|<\kappa\] which contradicts the regularity of $\kappa$. Thus this recursive construction is possible. Now choose $\beta=\sup\{\alpha_n:n <\omega\}$. Then $\alpha<\beta<\kappa$ since $cf(\beta)=\omega<\kappa=cf(\kappa)$. Let $x \in G_{\beta}$. Then we have $x \in G_{\alpha_k}$ for some $k<\omega$. By construction, we have $\varphi(x),\varphi^{-1}(x) \in G_{\alpha_{k+1}} \subseteq G_{\beta}$. It follows that $\varphi[G_{\beta}]=G_{\beta}$ and hence, $Stab(\varphi)$ is unbounded in $\kappa$.

Next will be shown that $Stab(\varphi)$ is closed in $\kappa$. Let $\gamma < \kappa$ and $(x_{\xi})_{\xi<\gamma}$ be an increasing sequence of elements of $Stab(\varphi)$. Consider $\delta=\sup\{x_{\xi}:\xi<\gamma\}$. Then $\delta$ is a limit ordinal and hence
\[ G_\delta=\bigcup_{\xi<\gamma} G_{x_{\xi}}=\bigcup_{\xi<\gamma} \varphi[G_{x_{\xi}}]=\varphi\left[\bigcup_{\xi<\gamma} G_{x_{\xi}}\right]=\varphi[G_{\delta}]\]
implying that $\delta \in Stab(\varphi)$. Thus $Stab(\varphi)$ is closed.\end{proof}

Motivated by Lemma \ref{fixedisclub}, we now introduce the notion of a level preserving automorphism. Let $C \subseteq \kappa$. An automorphism $\varphi \in Aut(G)$ is said to be $C$-\textit{level preserving} if \[\varphi[G_{\alpha}] = G_{\alpha}\]
for all $\alpha \in C$. We shall denote the set of $C$-level preserving automorphisms of $G$ by $Aut_C(G)$. We clearly have $Aut_{\emptyset}(G)=Aut(G)$ and $Aut_C(G) \leqslant Aut_D(G)$ whenever $D \subseteq C$.

Our main goal in this section is to understand the structure of $Aut_C(G)$ under additional hypotheses on $C$ and $G$. Before we proceed to do that, we first derive an important consequences of Lemma \ref{fixedisclub} which says that each ``small" subgroup of $Aut(G)$ is contained some $Aut_C(G)$. More precisely, we have the following.

\begin{corollary} Suppose that $\kappa$ is uncountable and regular. For every $H \leqslant Aut(G)$ with $|H|<\kappa$, there exists a club set $C \subseteq \kappa$ with $H \leqslant Aut_C(G)$
\end{corollary}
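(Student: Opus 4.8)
The plan is to let $C$ be the intersection of the stabilizing clubs of the elements of $H$ and to invoke the standard set-theoretic fact that an intersection of strictly fewer than $\kappa$ many club subsets of a regular uncountable cardinal $\kappa$ is again a club.

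First, by Lemma \ref{fixedisclub}, for each $\varphi \in H$ the set $Stab(\varphi)$ is a club subset of $\kappa$. Put $C=\bigcap_{\varphi \in H} Stab(\varphi)$. Since $|H|<\kappa$, this is an intersection of strictly fewer than $\kappa$ many clubs, and I would recall briefly why $C$ is itself a club. Closedness is immediate, since an arbitrary intersection of closed subsets of $\kappa$ is closed. For unboundedness, fix $\beta<\kappa$ and enumerate $H=\{\varphi_i : i<\lambda\}$ with $\lambda=|H|<\kappa$; recursively build an increasing sequence $(\beta_n)_{n<\omega}$ below $\kappa$ with $\beta_0=\beta$ and $\beta_{n+1}$ chosen above $\sup_{i<\lambda}\gamma_i$, where $\gamma_i\in Stab(\varphi_i)$ is some ordinal $>\beta_n$ (which exists by unboundedness of $Stab(\varphi_i)$). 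This is possible because $\lambda<\kappa=cf(\kappa)$, so the relevant supremum stays below $\kappa$. Setting $\beta^{\ast}=\sup_{n<\omega}\beta_n$, we get $\beta<\beta^{\ast}<\kappa$ since $cf(\kappa)=\kappa>\omega$, and for each $i<\lambda$ the set $Stab(\varphi_i)$ meets every interval $(\beta_n,\beta_{n+1})$, hence is unbounded in $\beta^{\ast}$; being also closed, it contains $\beta^{\ast}$. Thus $\beta^{\ast}\in C$, proving $C$ unbounded.

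Finally, if $\alpha\in C$ then $\alpha\in Stab(\varphi)$, i.e. $\varphi[G_{\alpha}]=G_{\alpha}$, for every $\varphi\in H$; hence each $\varphi\in H$ is $C$-level preserving and therefore $H\leqslant Aut_C(G)$, as required. The only non-formal ingredient is the lemma on small intersections of clubs, and this is precisely where both hypotheses on $\kappa$ enter: regularity keeps the partial suprema in the recursion below $\kappa$ and forces the intersection to remain unbounded, while uncountability guarantees $\omega<\kappa$ so that $\omega$-indexed suprema stay below $\kappa$. (Note the subgroup structure of $H$ plays no role beyond $|H|<\kappa$.)
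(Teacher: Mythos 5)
Your proof is correct and follows essentially the same route as the paper: take $C=\bigcap_{\varphi\in H}Stab(\varphi)$ and use that the intersection of fewer than $\kappa$ many clubs in a regular uncountable $\kappa$ is club. The only difference is that the paper cites this standard fact (Jech, Theorem 8.3) whereas you prove it directly; your direct argument is a correct rendition of the standard proof.
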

\begin{proof} Let $H \leqslant Aut(G)$ be with $|H|<\kappa$. By Lemma \ref{fixedisclub}, $Stab(\varphi) \subseteq \kappa$ is club for every $\varphi \in Aut(G)$. It then follows from \cite[Theorem 8.3]{Jech03} that the set
\[ C=\bigcap_{\varphi \in H} Stab(\varphi)\]
is a club subset of $\kappa$. By construction, we have $H \leqslant Aut_{C}(G)$.
\end{proof}

\begin{corollary} Suppose that $\kappa$ is uncountable and regular. Then $Aut(G)$ is the union of level preserving automorphisms.
\end{corollary}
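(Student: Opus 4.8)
The plan is to read the result off immediately from Lemma \ref{fixedisclub}. Given any $\varphi \in Aut(G)$, that lemma tells us that $Stab(\varphi) = \{\alpha < \kappa : \varphi[G_\alpha] = G_\alpha\}$ is a club subset of $\kappa$; since $\kappa$ is uncountable (and regular), every club subset is in particular unbounded, hence nonempty. By the very definition of $Stab(\varphi)$ we have $\varphi[G_\alpha] = G_\alpha$ for all $\alpha \in Stab(\varphi)$, which is exactly the assertion that $\varphi \in Aut_{Stab(\varphi)}(G)$.

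Consequently every automorphism of $G$ lies in $Aut_C(G)$ for some club, hence nonempty, $C \subseteq \kappa$, so that
\[ Aut(G) = \bigcup_{\varphi \in Aut(G)} Aut_{Stab(\varphi)}(G), \]
exhibiting $Aut(G)$ as a union of sets of level preserving automorphisms, as desired. Alternatively, one may invoke the preceding corollary applied to the cyclic subgroup $H = \langle \varphi \rangle \leqslant Aut(G)$, which has cardinality $\aleph_0 < \kappa$, to obtain a single club $C \subseteq \kappa$ with $\varphi \in Aut_C(G)$; either route works.

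I expect no real obstacle here: all the content is already contained in Lemma \ref{fixedisclub}, and the only point worth flagging is that it is the hypothesis on $\kappa$ (really just uncountability, through the nonemptiness of club sets) that ensures $Stab(\varphi) \neq \emptyset$, so that membership in some $Aut_C(G)$ with $C \neq \emptyset$ is a nonvacuous statement.
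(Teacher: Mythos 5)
Your proposal is correct and takes essentially the same approach as the paper: the paper's one\nobreakdash-line proof writes $Aut(G)$ as the union of its subgroups of size less than $\kappa$ and then invokes the preceding corollary, while you apply Lemma \ref{fixedisclub} directly to each $\varphi$ via $Stab(\varphi)$ (and even note the route through the preceding corollary as an alternative). Both arguments rest on exactly the same content, namely that $Stab(\varphi)$ is a club, hence nonempty, so that $\varphi \in Aut_{Stab(\varphi)}(G)$.
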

\begin{proof}$\displaystyle Aut(G)=\bigcup_{\substack{H \leqslant Aut(G) \\ |H|<\kappa}} H = \bigcup_{\substack{C \subseteq \kappa \\ C \text{ is club}}} Aut_C(G)$.\end{proof}

In particular, this corollary applies to $\kappa$-existentially closed groups.

\begin{corollary} Let $\kappa$ be inaccessible and let $K$ be the unique $\kappa$-existentially closed group of cardinality $\kappa$, which (necessarily) is a limit of regular representations of length $\kappa$ with countable base. Then
$$ Aut(K)= \bigcup_{\substack{C \subseteq \kappa \\ C \text{ is club}}} Aut_C(K)=\bigcup_{\alpha<\kappa} Aut_{\{\alpha\}}(K)$$
\end{corollary}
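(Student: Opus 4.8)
The plan is to deduce all three sets are equal from Lemma~\ref{fixedisclub} together with the two preceding corollaries, so the first thing I would do is check that $K$ is covered by the standing hypotheses $(\ast)$ of this section. Since $\kappa$ is inaccessible it is uncountable and regular, so only the cardinality clause of $(\ast)$ needs a word. Writing $K=\bigcup_{\alpha<\kappa}G_\alpha$ for the presentation of $K$ as a limit of regular representations of length $\kappa$ with countable base $G_0$, the clauses $G_\alpha\leqslant G_\beta$ and $G_\gamma=\bigcup_{\alpha<\gamma}G_\alpha$ at limit $\gamma$ are built into the definition, while $|G_\alpha|<\kappa$ for every $\alpha<\kappa$ follows by transfinite induction: at a successor $|G_{\alpha+1}|=|Sym(G_\alpha)|=2^{|G_\alpha|}<\kappa$ because $\kappa$ is a strong limit, and at a limit $\gamma<\kappa$ one has $|G_\gamma|=\sum_{\alpha<\gamma}|G_\alpha|<\kappa$ because $\kappa$ is regular. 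Hence $G=K$ satisfies $(\ast)$ and every result of this section applies to it.

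Granting this, the first equality $Aut(K)=\bigcup_{C\text{ club}}Aut_C(K)$ is precisely the preceding corollary (the one asserting that $Aut(G)$ is the union of its level preserving automorphisms). For the second equality I would prove the two inclusions separately. For $\bigcup_{C}Aut_C(K)\subseteq\bigcup_{\alpha}Aut_{\{\alpha\}}(K)$: a club $C\subseteq\kappa$ is in particular unbounded, hence nonempty, so fixing some $\alpha\in C$ gives $Aut_C(K)\leqslant Aut_{\{\alpha\}}(K)$ straight from the definition of $C$-level preserving, and taking the union over all clubs $C$ yields the inclusion. For $\bigcup_{\alpha}Aut_{\{\alpha\}}(K)\subseteq\bigcup_{C}Aut_C(K)$: since the right-hand side equals $Aut(K)$ by the first equality and each $Aut_{\{\alpha\}}(K)$ is by definition a subgroup of $Aut(K)$, there is nothing to do. If one prefers a direct argument for $Aut(K)\subseteq\bigcup_{\alpha}Aut_{\{\alpha\}}(K)$, given $\varphi\in Aut(K)$ Lemma~\ref{fixedisclub} says $Stab(\varphi)$ is club, hence nonempty, so picking any $\alpha\in Stab(\varphi)$ puts $\varphi$ in $Aut_{\{\alpha\}}(K)$.

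I do not expect any genuine obstacle here: once $(\ast)$ is known to hold for $K$, the statement is pure bookkeeping with the definitions and the triviality that clubs are nonempty, resting on the already-proved Lemma~\ref{fixedisclub}. The only point where something really must be verified — and where the inaccessibility of $\kappa$, rather than mere regularity, enters — is the cardinality bound $|G_\alpha|<\kappa$ along the filtration: without strong-limitness even the first infinite successor stage $2^{\aleph_0}$ could overshoot $\kappa$, which is exactly why the hypothesis is stated for inaccessible $\kappa$.
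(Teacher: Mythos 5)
Your proposal is correct and follows exactly the route the paper intends: the corollary is stated as an immediate consequence of Lemma~\ref{fixedisclub} and the two preceding corollaries, and your verification that $K$ satisfies $(\ast)$ (using that $\kappa$ is a regular strong limit) together with the observation that clubs are nonempty is precisely the bookkeeping the paper leaves implicit.
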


We would like to note that Lemma \ref{fixedisclub} and its first two corollaries do not have anything to do with the group structure of $G$, but rather, are related to the global combinatorial structure of $G$. Indeed, these results hold for arbitrary structures in the model-theoretic sense if one replaces ``subgroups" with ``substructures".

The hypothesis that $\kappa$ is uncountable and regular is essential to these results. For example, Hall's universal locally finite group is a countable increasing union of finite groups but it has automorphisms that are not level preserving; see \cite{Rabin77}.

We shall next focus on determining the group $Aut_C(G)$ which requires us to assume more about the group structure of $G$. For our purposes, suppose for the remainder of the paper that $G$ is a limit of regular representations of length $\kappa$ with base $Sym(n)$ for some $n \geq 7$, i.e. it satisfies the additional properties that
\begin{itemize}
\item $G_0 = Sym(n)$ for some $n \geq 7$.
\item $G_{\alpha} \hookrightarrow Sym(G_{\alpha}) = G_{\alpha+1}$ is embedded via its right regular representation.
\end{itemize}

We would like to note that these assumptions on $G$ together with properties $(\ast)$ imply that $\kappa$ is an inaccessible cardinal whenever it is uncountable and regular.

The group $G_{\omega}$ is known as Hall's universal locally finite group and is well-studied. For example, we refer the reader to \cite[Chapter 6]{KW73} for some of its properties.

Fix a club subset $A \subseteq \kappa$ and let $\mathbf{a}=(a_{\alpha}:\alpha<\kappa)$ be the (unique) increasing sequence of length $\kappa$ which enumerates $A$. Observe that, if $\gamma<\kappa$ is limit, then we have $\sup\{a_{\alpha}:\alpha<\gamma\} \in A$ because $A$ is closed, and hence, $a_{\gamma}=\sup\{a_{\alpha}:\alpha<\gamma\}$. So $\mathbf{a}$ has limit ordinals at indices that are limit ordinals.

Consider the set
\[ \mathcal{G}_A=\prod_{\substack{\alpha  <\kappa\\\alpha \text{ is non-limit}}} \mathbf{C}_{G_{a_{\alpha}}}(G_{a_{\alpha-1}})\]
where we define $G_{a_{-1}}=\{1_G\}$ and consequently $\mathbf{C}_{G_{a_{0}}}(G_{a_{-1}})=G_{a_{0}}$. Let $\mathbf{g}=(g_{\alpha}: \alpha<\kappa \text{ and } \alpha \text{ is non-limit})$ be an element of $\mathcal{G}_A$. We will recursively construct a sequence $(\Phi_{\alpha}^{\mathbf{g}})_{\alpha <\kappa}$ of automorphisms of $G$ such that
$$(\dagger)\ \ \ \Phi^{\mathbf{g}}_{\alpha}[G_{a_{\delta}}]=G_{a_{\delta}} \text{ and }\Phi^{\mathbf{g}}_{\beta} \upharpoonright G_{a_{\beta}}= \Phi^{\mathbf{g}}_{\alpha} \upharpoonright G_{a_{\beta}} \text{ for all }\beta<\alpha \leq \delta<\kappa$$
The construction here is inspired by \cite[Theorem 6.8]{KW73}, however, its generalization will bring some subtle issues as we shall see. Our recursive construction is as follows.
\begin{itemize}
\item For $\alpha=0$, set $\Phi_0^{\mathbf{g}}=\iota_{g_0}$. Then $\Phi_0^{\mathbf{g}}$ satisfies $(\dagger)$.
\item Let $\alpha<\kappa$ and suppose that such maps $(\Phi_{\beta}^{\mathbf{g}})_{\beta \leq \alpha}$ satisfying the conditions have been constructed. Set $$\Phi_{\alpha+1}^{\mathbf{g}}=\iota_{g_{\alpha+1}} \circ \Phi_{\alpha}^{\mathbf{g}}$$ We have $\Phi_{\alpha+1}^{\mathbf{g}} \upharpoonright G_{a_{\alpha}} = \Phi^{\mathbf{g}}_{\alpha} \upharpoonright G_{a_{\alpha}}$ because $g_{\alpha+1} \in \mathbf{C}_{G_{a_{\alpha+1}}}(G_{a_{\alpha}})$. Together with the inductive assumption, this shows that $\Phi_{\alpha+1}^{\mathbf{g}}$ satisfies $(\dagger)$.
\item Let $\gamma<\kappa$ be limit and suppose that such maps $(\Phi_{\beta}^{\mathbf{g}})_{\beta < \gamma}$ satisfying the conditions have been constructed. Observe that the map
$$\Psi^{\mathbf{g}}_{\gamma}=\lim_{\alpha<\gamma} \left(\Phi_{\alpha}^{\mathbf{g}} \upharpoonright G_{a_{\alpha}}\right)$$
is an automorphism of $G_{a_{\gamma}}$ and so we can view $\Psi^{\mathbf{g}}_{\gamma} \in Sym(G_{a_{\gamma}})$ as an element $\psi^{\mathbf{g}}_{\gamma} \in G_{a_{\gamma}+1}$. Set $\Phi^{\mathbf{g}}_{\gamma}=\iota_{\psi^{\mathbf{g}}_{\gamma}}$. Then, as $G_{a_{\gamma}}$ is embedded into $Sym(G_{a_{\gamma}}) = G_{a_{\gamma}+1}$ by its right regular representation, we have that
\[\Phi^{\mathbf{g}}_{\gamma}(x)=\iota_{\psi^{\mathbf{g}}_{\gamma}}(x)=\Psi^{\mathbf{g}}_{\gamma}(x)=\Phi_{\alpha}^{\mathbf{g}}(x)\]
for all $x \in G_{a_{\alpha}}$ and
for all $\alpha < \gamma$. Thus $(\dagger)$ is satisfied for $\Phi^{\mathbf{g}}_{\gamma}$.
\end{itemize}
Having constructed such a sequence  $(\Phi_{\alpha}^{\mathbf{g}})_{\alpha <\kappa}$, it is easily checked that the map
\[ \Phi^{\mathbf{g}}=\lim_{\alpha<\kappa} \left(\Phi_{\alpha} \upharpoonright G_{a_{\alpha}}^{\mathbf{g}}\right)\]
is an $A$-level preserving automorphism of $G$.

We shall now endow the set
\[ \prod_{\substack{\alpha  <\kappa\\\alpha \text{ is non-limit}}} \mathbf{C}_{G_{a_{\alpha}}}(G_{a_{\alpha-1}})\]
with the binary operation given by
\begin{align*}
\mathbf{g} \cdot \mathbf{h} &=(g_{\alpha}: \alpha<\kappa \text{ and } \alpha \text{ is non-limit})\ \cdot\ (h_{\alpha}: \alpha<\kappa \text{ and } \alpha \text{ is non-limit})\\
&=(g_{\alpha}\Phi^{\mathbf{g}}_{\alpha-1}(h_{\alpha}): \alpha<\kappa \text{ and } \alpha \text{ is non-limit})
\end{align*}
where we define $\Phi^{\mathbf{g}}_{-1}(h_{0})=h_{0}$. As expected, we will obtain a group structure. On the other hand, in order to prove that this operation is associative, we need the following fact.

\begin{proposition}\label{mainlemma} $\Phi^{\mathbf{g} \cdot \mathbf{h}}_{\alpha}=\Phi^{\mathbf{g}}_{\alpha} \circ \Phi^{\mathbf{h}}_{\alpha}$ for all $\alpha<\kappa$.
\end{proposition}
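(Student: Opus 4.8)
The plan is to prove the identity $\Phi^{\mathbf{g}\cdot\mathbf{h}}_\alpha = \Phi^{\mathbf{g}}_\alpha \circ \Phi^{\mathbf{h}}_\alpha$ by transfinite induction on $\alpha<\kappa$, following exactly the three-case structure ($\alpha=0$, successor, limit) used to define the maps $\Phi^{\mathbf{g}}_\alpha$. Set $\mathbf{k}=\mathbf{g}\cdot\mathbf{h}$, so that $k_\alpha = g_\alpha\,\Phi^{\mathbf{g}}_{\alpha-1}(h_\alpha)$ for non-limit $\alpha$. For $\alpha=0$ we have $\Phi^{\mathbf{k}}_0 = \iota_{k_0} = \iota_{g_0 h_0}$; since conjugation is an anti-homomorphism in the convention $\iota_{xy}=\iota_x\circ\iota_y$ used in the paper, this equals $\iota_{g_0}\circ\iota_{h_0} = \Phi^{\mathbf{g}}_0 \circ \Phi^{\mathbf{h}}_0$, as desired.

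For the successor step, assume $\Phi^{\mathbf{k}}_\alpha = \Phi^{\mathbf{g}}_\alpha \circ \Phi^{\mathbf{h}}_\alpha$. By definition, $\Phi^{\mathbf{k}}_{\alpha+1} = \iota_{k_{\alpha+1}}\circ\Phi^{\mathbf{k}}_\alpha$ where $k_{\alpha+1} = g_{\alpha+1}\,\Phi^{\mathbf{g}}_\alpha(h_{\alpha+1})$. Using $\iota_{xy}=\iota_x\circ\iota_y$, write $\iota_{k_{\alpha+1}} = \iota_{g_{\alpha+1}}\circ\iota_{\Phi^{\mathbf{g}}_\alpha(h_{\alpha+1})}$. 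The key algebraic observation is the conjugation transport rule $\iota_{\varphi(y)} = \varphi\circ\iota_y\circ\varphi^{-1}$ valid for any automorphism $\varphi$ of $G$; applying it with $\varphi=\Phi^{\mathbf{g}}_\alpha$ and $y=h_{\alpha+1}$ gives $\iota_{\Phi^{\mathbf{g}}_\alpha(h_{\alpha+1})} = \Phi^{\mathbf{g}}_\alpha \circ \iota_{h_{\alpha+1}} \circ (\Phi^{\mathbf{g}}_\alpha)^{-1}$. Substituting everything in and using the inductive hypothesis,
\[
\Phi^{\mathbf{k}}_{\alpha+1} = \iota_{g_{\alpha+1}} \circ \Phi^{\mathbf{g}}_\alpha \circ \iota_{h_{\alpha+1}} \circ (\Phi^{\mathbf{g}}_\alpha)^{-1} \circ \Phi^{\mathbf{g}}_\alpha \circ \Phi^{\mathbf{h}}_\alpha = \big(\iota_{g_{\alpha+1}}\circ\Phi^{\mathbf{g}}_\alpha\big)\circ\big(\iota_{h_{\alpha+1}}\circ\Phi^{\mathbf{h}}_\alpha\big) = \Phi^{\mathbf{g}}_{\alpha+1}\circ\Phi^{\mathbf{h}}_{\alpha+1},
\]
which closes the successor case. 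I expect this to be the step where the precise choice of the twisted multiplication $k_{\alpha+1}=g_{\alpha+1}\Phi^{\mathbf{g}}_\alpha(h_{\alpha+1})$ reveals its purpose, so it is worth displaying in full.

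For the limit case, let $\gamma<\kappa$ be a limit ordinal and assume $\Phi^{\mathbf{k}}_\alpha = \Phi^{\mathbf{g}}_\alpha\circ\Phi^{\mathbf{h}}_\alpha$ for all $\alpha<\gamma$. By property $(\dagger)$, $\Phi^{\mathbf{k}}_\gamma$ is the unique automorphism of $G$ of the form $\iota_{\psi}$ for $\psi\in G_{a_\gamma+1}$ whose restriction to each $G_{a_\alpha}$ ($\alpha<\gamma$) agrees with $\Phi^{\mathbf{k}}_\alpha\!\upharpoonright\! G_{a_\alpha}$; more precisely $\Phi^{\mathbf{k}}_\gamma\!\upharpoonright\! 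G_{a_\alpha} = \Phi^{\mathbf{k}}_\alpha\!\upharpoonright\! G_{a_\alpha}$. So it suffices to check that $\Phi^{\mathbf{g}}_\gamma\circ\Phi^{\mathbf{h}}_\gamma$ restricted to $G_{a_\alpha}$ equals $\Phi^{\mathbf{k}}_\alpha\!\upharpoonright\! G_{a_\alpha}$ for every $\alpha<\gamma$, together with noting that $\Phi^{\mathbf{g}}_\gamma\circ\Phi^{\mathbf{h}}_\gamma$ is inner-by-an-element-of-$G_{a_\gamma+1}$ so that the two agree by the uniqueness clause of $(\dagger)$ — this last point needs a small argument: $\Phi^{\mathbf{h}}_\gamma[G_{a_\gamma}]=G_{a_\gamma}$ and $\Phi^{\mathbf{g}}_\gamma[G_{a_\gamma}]=G_{a_\gamma}$, and both are realized by conjugation by elements of $G_{a_\gamma+1}=Sym(G_{a_\gamma})$, hence so is their composite, which therefore lies in the same family as $\Phi^{\mathbf{k}}_\gamma$. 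For the restriction computation, fix $\alpha<\gamma$ and $x\in G_{a_\alpha}$; then $\Phi^{\mathbf{h}}_\gamma(x) = \Phi^{\mathbf{h}}_\alpha(x)\in G_{a_\alpha}$ by $(\dagger)$, so $(\Phi^{\mathbf{g}}_\gamma\circ\Phi^{\mathbf{h}}_\gamma)(x) = \Phi^{\mathbf{g}}_\gamma(\Phi^{\mathbf{h}}_\alpha(x)) = \Phi^{\mathbf{g}}_\alpha(\Phi^{\mathbf{h}}_\alpha(x)) = \Phi^{\mathbf{k}}_\alpha(x)$ using $(\dagger)$ again for $\Phi^{\mathbf{g}}$ (note $\Phi^{\mathbf{h}}_\alpha(x)\in G_{a_\alpha}$) and the inductive hypothesis at $\alpha$. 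This matches $\Phi^{\mathbf{k}}_\gamma(x)$, completing the limit step and the induction.

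The main obstacle I anticipate is the limit stage: one must argue carefully that $\Phi^{\mathbf{g}}_\gamma\circ\Phi^{\mathbf{h}}_\gamma$ is \emph{the} automorphism produced by the recursive recipe for $\mathbf{k}$ at stage $\gamma$, not merely \emph{an} automorphism agreeing with it on all lower levels. The recipe defines $\Phi^{\mathbf{k}}_\gamma$ as conjugation by the specific element $\psi^{\mathbf{k}}_\gamma\in G_{a_\gamma+1}$ encoding the automorphism $\lim_{\alpha<\gamma}(\Phi^{\mathbf{k}}_\alpha\!\upharpoonright\! G_{a_\alpha})$ of $G_{a_\gamma}$ via the right regular representation; so one checks that $\Phi^{\mathbf{g}}_\gamma\circ\Phi^{\mathbf{h}}_\gamma$ also restricts on $G_{a_\gamma}$ to an automorphism implemented by conjugation by an element of $G_{a_\gamma+1}$ (which it does, being the composite of two such), and that this restricted automorphism of $G_{a_\gamma}$ is exactly $\lim_{\alpha<\gamma}(\Phi^{\mathbf{k}}_\alpha\!\upharpoonright\! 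G_{a_\alpha})$ by the pointwise computation above. Since the passage $\mathrm{Aut}(G_{a_\gamma})\hookrightarrow G_{a_\gamma+1}$ is injective, the two conjugating elements coincide and hence $\Phi^{\mathbf{k}}_\gamma = \Phi^{\mathbf{g}}_\gamma\circ\Phi^{\mathbf{h}}_\gamma$ as automorphisms of the whole group $G$. The successor and zero cases are then purely formal consequences of the anti-homomorphism property of $g\mapsto\iota_g$ and the transport rule $\varphi\iota_y\varphi^{-1}=\iota_{\varphi(y)}$.
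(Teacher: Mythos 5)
Your proposal is correct and takes essentially the same route as the paper: transfinite induction with the same base case, the same successor computation via $\iota_{xy}=\iota_x\circ\iota_y$ and the transport identity $\iota_{\varphi(y)}=\varphi\circ\iota_y\circ\varphi^{-1}$, and the same limit-stage idea. Your limit case is just a more explicit unpacking of the paper's one-line identification $\psi^{\mathbf{g}\mathbf{h}}_{\gamma}=\psi^{\mathbf{g}}_{\gamma}\psi^{\mathbf{h}}_{\gamma}$ via the injective correspondence $Aut(G_{a_\gamma})\hookrightarrow Sym(G_{a_\gamma})=G_{a_\gamma+1}$.
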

\begin{proof}We shall prove this by transfinite induction. The claim holds for $\alpha=0$ since
\[ \Phi^{\mathbf{g} \cdot \mathbf{h}}_{0}=\iota_{g_0 \cdot h_0}=\iota_{g_0} \circ \iota_{h_0} = \Phi^{\mathbf{g}}_{0} \circ \Phi^{\mathbf{h}}_{0}\]
Let $\alpha < \kappa$ and suppose that the claim holds for $\alpha<\kappa$. Then, by the induction assumption, we have that
\begin{align*}
\Phi^{\mathbf{g} \cdot \mathbf{h}}_{\alpha+1}&= \iota_{\left(g_{\alpha+1} \cdot \Phi^{\mathbf{g}}_{\alpha}(h_{\alpha+1})\right)} \circ \Phi^{\mathbf{g} \cdot \mathbf{h}}_{\alpha}\\
&=\iota_{g_{\alpha+1}} \circ \iota_{\Phi^{\mathbf{g}}_{\alpha}(h_{\alpha+1})} \circ \Phi^{\mathbf{g}}_{\alpha} \circ \Phi^{\mathbf{h}}_{\alpha}\\
&=\iota_{g_{\alpha+1}} \circ \Phi^{\mathbf{g}}_{\alpha} \circ \iota_{h_{\alpha+1}} \circ\Phi^{\mathbf{h}}_{\alpha}\\
&=\Phi^{\mathbf{g}}_{\alpha+1} \circ \Phi^{\mathbf{h}}_{\alpha+1}
\end{align*}
Let $\gamma < \kappa$ be limit and suppose that the claim holds for all $\alpha < \gamma$. Then, by the induction assumption, we have $\Psi^{\mathbf{g} \mathbf{h}}_{\gamma}=\Psi^{\mathbf{g} }_{\gamma} \circ \Psi^{\mathbf{h}}_{\gamma}$ where $\Psi$ are as in the inductive construction. It follows that \[\Phi^{\mathbf{g} \cdot \mathbf{h}}_{\gamma}=\iota_{\psi^{\mathbf{g}\mathbf{h}}_{\gamma}}=\iota_{\psi^{\mathbf{g}}_{\gamma} \psi^{\mathbf{h}}_{\gamma}}=
\iota_{\psi^{\mathbf{g}}_{\gamma}} \circ \iota_{\psi^{\mathbf{h}}_{\gamma}}=\Phi^{\mathbf{g}}_{\gamma} \circ \Phi^{\mathbf{h}}_{\gamma}
\]
Thus the claim holds for $\gamma$ which completes the inductive proof.\end{proof}

Using Proposition \ref{mainlemma}, it is tedious but straightforward to check that the set
\[ \mathcal{G}_A=\prod_{\substack{\alpha  <\kappa\\\alpha \text{ is non-limit}}} \mathbf{C}_{G_{a_{\alpha}}}(G_{a_{\alpha-1}})\] together with the operation defined above indeed forms a group. The group $\mathcal{G}_A$ is supposed to be the transfinite external semi-direct product
\[ \displaystyle \left(\left(\dots\left(\left(G_{a_0}\ _{\Phi_{0}}\ltimes \mathbf{C}_{G_{a_1}}(G_{a_0})\right)\ _{\Phi_{1}}\ltimes \mathbf{C}_{G_{a_2}}(G_{a_1})\right)\dots\right)\ _{\Phi_{\omega}}\ltimes \mathbf{C}_{G_{a_{\omega+1}}}(G_{a_{\omega}})\right)...\]
which can formally be constructed as the inverse limit of semi-direct products defined appropriately. It turns out that most of these semi-direct products are indeed direct products as implied by the following proposition.

\begin{proposition}\label{semidirectstrivialize} Let $\alpha <\kappa$ be limit. Then
$$\Phi^{\mathbf{g}}_{\alpha+n}(h_{\alpha+m})=h_{\alpha+m}$$
for all $1 \leq n < m <\omega$.
\end{proposition}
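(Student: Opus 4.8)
The plan is to reduce the statement to a single commutation fact by recognizing $\Phi^{\mathbf{g}}_{\alpha+n}$ as conjugation by one element that already sits low enough in the filtration. First I would unwind the recursion. Since $\alpha$ is a limit ordinal, the construction sets $\Phi^{\mathbf{g}}_{\alpha}=\iota_{\psi^{\mathbf{g}}_{\alpha}}$ with $\psi^{\mathbf{g}}_{\alpha}\in G_{a_{\alpha}+1}$, and the successor clauses then give $\Phi^{\mathbf{g}}_{\alpha+n}=\iota_{g_{\alpha+n}}\circ\iota_{g_{\alpha+n-1}}\circ\cdots\circ\iota_{g_{\alpha+1}}\circ\iota_{\psi^{\mathbf{g}}_{\alpha}}$. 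Because $g\mapsto\iota_{g}$ is a homomorphism into $Aut(G)$, this equals $\iota_{c}$, where $c=g_{\alpha+n}g_{\alpha+n-1}\cdots g_{\alpha+1}\,\psi^{\mathbf{g}}_{\alpha}$.

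Next I would track the levels of the factors of $c$. For each $1\le j\le n$ we have $g_{\alpha+j}\in\mathbf{C}_{G_{a_{\alpha+j}}}(G_{a_{\alpha+j-1}})\leqslant G_{a_{\alpha+j}}\leqslant G_{a_{\alpha+n}}$, and since the enumeration $\mathbf{a}$ is strictly increasing we get $a_{\alpha}+1\le a_{\alpha+1}\le a_{\alpha+n}$, so $\psi^{\mathbf{g}}_{\alpha}\in G_{a_{\alpha}+1}\leqslant G_{a_{\alpha+n}}$ as well. Hence $c\in G_{a_{\alpha+n}}$. Using $1\le n<m$, so that $\alpha+n\le\alpha+m-1$, this yields $c\in G_{a_{\alpha+n}}\leqslant G_{a_{\alpha+m-1}}$.

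Finally, the hypothesis $h_{\alpha+m}\in\mathbf{C}_{G_{a_{\alpha+m}}}(G_{a_{\alpha+m-1}})$ says exactly that $h_{\alpha+m}$ commutes with every element of $G_{a_{\alpha+m-1}}$, and in particular with $c$; thus $\Phi^{\mathbf{g}}_{\alpha+n}(h_{\alpha+m})=\iota_{c}(h_{\alpha+m})=c\,h_{\alpha+m}\,c^{-1}=h_{\alpha+m}$. The only delicate point is the level bookkeeping in the middle step — verifying that $\psi^{\mathbf{g}}_{\alpha}$ and all the $g_{\alpha+j}$ with $j\le n$ lie at or below level $a_{\alpha+m-1}$ — which is precisely where the strict monotonicity of $\mathbf{a}$ and the inequality $n<m$ are used; everything else is immediate from the recursive construction of the $\Phi^{\mathbf{g}}_{\beta}$ and from $\iota$ being multiplicative.
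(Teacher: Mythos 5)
Your proof is correct and rests on exactly the same fact as the paper's: every conjugating element accumulated up to stage $\alpha+n$ (the $g_{\alpha+j}$ for $j\le n$ and $\psi^{\mathbf{g}}_{\alpha}$) lies in $G_{a_{\alpha+m-1}}$, which $h_{\alpha+m}$ centralizes. The only difference is presentational: the paper runs an induction on $n$, peeling off one $\iota_{g_{\alpha+n+1}}$ at a time, whereas you collapse the whole composition into a single conjugation $\iota_c$ using the multiplicativity of $\iota$ — both are sound and the level bookkeeping you carry out is exactly what the induction hides.
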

\begin{proof} We will prove this by induction on $n$. Let $1 < m <\omega$. Since we have $h_{\alpha+m} \in \mathbf{C}_{G_{a_{\alpha+m}}}(G_{a_{\alpha+m-1}}) \subseteq \mathbf{C}_{G_{a_{\alpha+m}}}(G_{a_{\alpha+1}})$, we obtain
\[\Phi^{\mathbf{g}}_{\alpha+1}(h_{\alpha+m})=\left(\iota_{g_{\alpha+1}} \circ \Phi^{\mathbf{g}}_{\alpha}\right)(h_{\alpha+m})=\left(\iota_{g_{\alpha+1}} \circ \iota_{\psi^{\mathbf{g}}_{\alpha}}\right)(h_{\alpha+m})=h_{\alpha+m}\]
and hence the claim holds for $n=1$. Let $1 \leq n < \omega$ and suppose that the claim holds for $n$. Let $n+1 < m < \omega$. Then we have
\[\Phi^{\mathbf{g}}_{\alpha+n+1}(h_{\alpha+m})=\left(\iota_{g_{\alpha+n+1}} \circ \Phi^{\mathbf{g}}_{\alpha+n}\right)(h_{\alpha+m})=\iota_{g_{\alpha+n+1}}(h_{\alpha+m})=h_{\alpha+m}\]
because $h_{\alpha+m} \in \mathbf{C}_{G_{a_{\alpha+m}}}(G_{a_{\alpha+m-1}}) \subseteq \mathbf{C}_{G_{a_{\alpha+m}}}(G_{a_{\alpha+n+1}})$. Thus the claim holds for $n+1$ which completes the inductive proof.\end{proof}

It is also easily shown that $\Phi^{\mathbf{g}}_{0}(h_{m})=h_{m}$ for all $1 \leq m < \omega$. Therefore the semi-direct products at indices that are not successors of limits are indeed direct products. The reason that this does not generalize to all indices is that, for a limit ordinal, for example $\omega$, we have no reason to have $$\Phi^{\mathbf{g}}_{\omega}(h_{\omega+1})=\iota_{\psi^{\mathbf{g}}_{\omega}}(h_{\omega+1})=h_{\omega+1}$$ because $\psi^{\mathbf{g}}_{\omega} \in G_{a_{\omega}+1}$ need not commute with $h_{\omega+1} \in G_{a_{\omega}+1}$.

We are now ready to prove the first main theorem of this section. Consider the map $\Theta: \mathcal{G}_A \rightarrow Aut_A(G)$ given by $\Theta(\mathbf{g})=\Phi^{\mathbf{g}}$.

\begin{theorem}\label{mainembeddingtheorem} $\Theta$ is a monomorphism from $\mathcal{G}_A$ to $Aut_A(G)$.
\end{theorem}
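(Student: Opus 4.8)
The plan is to show $\Theta$ is a well-defined group homomorphism and then that it is injective. Well-definedness is essentially already in hand: the recursive construction of $(\Phi^{\mathbf g}_\alpha)_{\alpha<\kappa}$ produces, for each $\mathbf g \in \mathcal G_A$, a coherent system of automorphisms whose direct limit $\Phi^{\mathbf g}$ is an $A$-level preserving automorphism of $G$, so $\Theta(\mathbf g) = \Phi^{\mathbf g}$ lands in $Aut_A(G)$. The homomorphism property is the heart of the matter, and it is exactly what Proposition \ref{mainlemma} delivers: since $\Phi^{\mathbf g \cdot \mathbf h}_\alpha = \Phi^{\mathbf g}_\alpha \circ \Phi^{\mathbf h}_\alpha$ for every $\alpha<\kappa$, passing to the direct limit over $\alpha<\kappa$ gives $\Phi^{\mathbf g \cdot \mathbf h} = \Phi^{\mathbf g} \circ \Phi^{\mathbf h}$, i.e. $\Theta(\mathbf g \cdot \mathbf h) = \Theta(\mathbf g)\boldsymbol\cdot\Theta(\mathbf h)$ in $Aut(G)$. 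One should note in passing that this also confirms $\mathcal G_A$ is a group (associativity transported across a limit, identity $\mathbf 1 = (1_G,1_G,\dots)$ mapping to $\mathrm{id}_G$, inverses existing because $\Theta$ is a homomorphism into a group and $\Theta$ will be shown injective), although the excerpt asserts the group structure was already checked directly.

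For injectivity I would argue by contrapositive: suppose $\mathbf g \neq \mathbf h$, and let $\alpha_0<\kappa$ be the least non-limit index with $g_{\alpha_0} \neq h_{\alpha_0}$. The goal is to exhibit an element of $G$ moved differently by $\Phi^{\mathbf g}$ and $\Phi^{\mathbf h}$. By minimality of $\alpha_0$ and the coherence condition $(\dagger)$, the maps $\Phi^{\mathbf g}$ and $\Phi^{\mathbf h}$ agree on $G_{a_{\alpha_0 - 1}}$ (or on all of $G_{a_{\alpha_0-1}}$ trivially when $\alpha_0=0$, using $G_{a_{-1}}=\{1_G\}$ — in the $\alpha_0=0$ case the two maps are $\iota_{g_0}$ and $\iota_{h_0}$ directly). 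Write $\pi = \Phi^{\mathbf g}_{\alpha_0-1} = \Phi^{\mathbf h}_{\alpha_0-1}$ for their common restriction/value at the previous stage. Then on $G_{a_{\alpha_0}}$ we have $\Phi^{\mathbf g}_{\alpha_0} = \iota_{g_{\alpha_0}} \circ \pi$ and $\Phi^{\mathbf h}_{\alpha_0} = \iota_{h_{\alpha_0}} \circ \pi$, so these two automorphisms of $G_{a_{\alpha_0}}$ coincide if and only if $\iota_{g_{\alpha_0}} = \iota_{h_{\alpha_0}}$ as automorphisms of $G_{a_{\alpha_0}}$, i.e. if and only if $g_{\alpha_0} h_{\alpha_0}^{-1} \in \mathbf C_{G_{a_{\alpha_0}}}(G_{a_{\alpha_0}}) = Z(G_{a_{\alpha_0}})$.

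Hence the crux — and the step I expect to be the main obstacle — is to see that $Z(G_{a_{\alpha_0}})$ is trivial, so that $g_{\alpha_0} \neq h_{\alpha_0}$ forces $\Phi^{\mathbf g}_{\alpha_0} \neq \Phi^{\mathbf h}_{\alpha_0}$ on $G_{a_{\alpha_0}}$, and therefore (again by $(\dagger)$, since both $\Phi^{\mathbf g}$ and $\Phi^{\mathbf h}$ restrict to their stage-$\alpha_0$ maps on $G_{a_{\alpha_0}}$) $\Phi^{\mathbf g} \neq \Phi^{\mathbf h}$. Triviality of the center is where the standing hypothesis $G_0 = Sym(n)$ with $n \geq 7$ is used: $Sym(n)$ is centerless for $n \geq 3$, and centerlessness is preserved under the operation $H \mapsto Sym(H)$ whenever $|H| \geq 3$ (for $|H|\geq 3$ one has $Z(Sym(H))=1$), and under direct limits of centerless groups along an increasing chain. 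An easy transfinite induction along the levels then shows every $G_\beta$, and hence every $G_{a_{\alpha_0}}$, is centerless; in fact $Z(G)=1$ as well. I would isolate this centerlessness as a small preliminary observation (or cite it from the literature on Hall's group and its generalizations) and then the injectivity argument closes immediately. Combining well-definedness, the homomorphism property from Proposition \ref{mainlemma}, and this injectivity argument completes the proof that $\Theta$ is a monomorphism.
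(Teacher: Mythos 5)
Your proposal is correct and follows essentially the same route as the paper: the homomorphism property is read off from Proposition \ref{mainlemma} by passing to the limit, and injectivity is obtained by locating the least non-limit index where $\mathbf{g}$ and $\mathbf{h}$ differ and using that each $G_{a_\alpha}$ is centerless, so distinct elements induce distinct inner automorphisms. The only cosmetic difference is that you spell out the transfinite induction establishing centerlessness of the levels, which the paper simply asserts.
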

\begin{proof} Let $\mathbf{g}, \mathbf{h} \in \mathcal{G}_A$. Since we have
\[ \Phi^{\mathbf{g} \cdot \mathbf{h}}_{\alpha} =\left(\Phi^{\mathbf{g}}_{\alpha} \circ \Phi^{\mathbf{h}}_{\alpha}\right)\]
and $\Phi^{\cdot}$ is defined as the limit of the restrictions $\Phi^{\cdot}_{\alpha} \upharpoonright G_{a_{\alpha}}$ each of which extends the previous ones, it easily follows from Proposition \ref{mainlemma} that $\Theta$ is a homomorphism.

It remains to prove that $\Theta$ is injective. Recall that each $G_{a_{\alpha}}$ is centerless. Now suppose that $\mathbf{g} \neq \mathbf{h}$. Take the least $\alpha$ such that $g_{\alpha} \neq h_{\alpha}$. If $\alpha=0$, then we have $\Theta(\mathbf{g})\neq\Theta(\mathbf{h})$ because 
\[ \Phi^{\mathbf{g}}(x)=\Phi^{\mathbf{g}}_0(x)=\iota_{g_0}(x) \neq \iota_{h_0}(x)=\Phi^{\mathbf{h}}_0(x)=\Phi^{\mathbf{h}}(x)\]
for some $x \in G_{a_0}$. This follows from that $G_{a_0}$ being centerless implies that distinct elements induce distinct inner automorphisms. Suppose $\alpha \neq 0$. Then $\alpha=\beta+1$ and it follows from the minimality of $\alpha$ that $\Phi^{\mathbf{g}}_{\beta}=\Phi^{\mathbf{h}}_{\beta}$. As before, $G_{a_{\alpha}}$ being centerless implies that there exists $x \in G_{a_\alpha}$ such that $\iota_{g_\alpha}(x) \neq \iota_{h_\alpha}(x)$. Then we have $y=\left(\Phi^{\mathbf{g}}_{\beta}\right)^{-1}(x) \in G_{a_{\alpha}}$ and so
\begin{align*}
\Phi^{\mathbf{g}}\left(y\right)=\Phi_{\alpha}^{\mathbf{g}}\left(y\right)&=(\iota_{g_{\alpha}} \circ \Phi^{\mathbf{g}}_{\beta})\left(\left(\Phi^{\mathbf{g}}_{\beta}\right)^{-1}(x)\right)\\
&=\iota_{g_{\alpha}}(x)\\
&\neq \iota_{h_{\alpha}}(x)\\
&=(\iota_{h_{\alpha}} \circ \Phi^{\mathbf{h}}_{\beta})\left(\left(\Phi^{\mathbf{h}}_{\beta}\right)^{-1}(x)\right)=\Phi_{\alpha}^{\mathbf{h}}\left(y\right)=\Phi^{\mathbf{h}}\left(y\right)
\end{align*}
Thus $\Theta(\mathbf{g})\neq\Theta(\mathbf{h})$ and so $\Theta$ is a group embedding.
\end{proof}

The second main theorem of this section is that $\Theta$ is indeed an isomorphism under additional mild assumptions on $A$.

\begin{theorem}\label{mainsurjectiontheorem} $\Theta$ is an isomorphism from $\mathcal{G}_A$ onto $Aut_A(G)$ provided that $a_{\alpha}$ is not limit whenever $\alpha$ is not limit.
\end{theorem}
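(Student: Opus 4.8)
The plan is to show surjectivity of $\Theta$; injectivity and the homomorphism property are already established in Theorem \ref{mainembeddingtheorem}. So fix an arbitrary $\varphi \in Aut_A(G)$ and produce $\mathbf{g} \in \mathcal{G}_A$ with $\Phi^{\mathbf{g}}=\varphi$. The idea is to define the coordinates $g_\alpha$ recursively along the non-limit $\alpha<\kappa$ so that at each stage the partial automorphism $\Phi^{\mathbf{g}}_\alpha$ agrees with $\varphi$ on $G_{a_\alpha}$; once this is arranged for all $\alpha$, passing to the limit gives $\Phi^{\mathbf{g}}=\varphi$ on all of $G=\bigcup_{\alpha<\kappa} G_{a_\alpha}$ (here the extra hypothesis that $a_\alpha$ is non-limit whenever $\alpha$ is non-limit guarantees that $A$ meets successor stages cofinally, so the coordinates at non-limit indices really do control everything).

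First I would handle $\alpha=0$: since $\varphi[G_{a_0}]=G_{a_0}$ and $G_{a_0}$ is centerless, $\varphi \upharpoonright G_{a_0}$ is an automorphism of $G_{a_0}$, hence by the construction of $G$ (the regular representation embeds $G_{a_0}=Sym(n)$-built group into its own symmetric group, and $Sym(G_{a_0})$ is realized inside $G_{a_0+1}$) there is an element inducing it; but more directly, to land in $\mathbf{C}_{G_{a_0}}(G_{a_{-1}})=G_{a_0}$ I need $\varphi\upharpoonright G_{a_0}$ to be \emph{inner} on $G_{a_0}$ — this requires knowing $\mathrm{Out}(G_{a_0})$ behaves well, so I expect here one must instead say: since $\varphi[G_{a_0}]=G_{a_0}$, the restriction is an automorphism, and by the completeness/no-outer-automorphism properties available for these groups (or by absorbing the outer part into the next coordinate via the ambient $G_{a_1}=Sym(G_{a_0})$), choose $g_0\in G_{a_0}$ with $\iota_{g_0}\upharpoonright G_{a_0}=\varphi\upharpoonright G_{a_0}$. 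For the successor step $\alpha=\beta+1$: assume $\Phi^{\mathbf{g}}_\beta\upharpoonright G_{a_\beta}=\varphi\upharpoonright G_{a_\beta}$. Consider $\theta=(\Phi^{\mathbf{g}}_\beta)^{-1}\circ\varphi\upharpoonright G_{a_\alpha}$; this is an automorphism of $G_{a_\alpha}$ fixing $G_{a_\beta}$ pointwise, hence $\theta\in Aut(G_{a_\alpha})$ restricts to the identity on $G_{a_\beta}$. Now $G_{a_\alpha}=Sym(G_{a_{\alpha-1}})$ or is built as a regular representation, and since $Sym(\Omega)$ is a complete group, every such $\theta$ fixing the image of the regular representation pointwise is inner, induced by some $g_\alpha$ centralizing $G_{a_\beta}$, i.e. $g_\alpha\in\mathbf{C}_{G_{a_\alpha}}(G_{a_{\alpha-1}})$; set this as the next coordinate and check $\Phi^{\mathbf{g}}_\alpha=\iota_{g_\alpha}\circ\Phi^{\mathbf{g}}_\beta$ now agrees with $\varphi$ on $G_{a_\alpha}$. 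For limit $\gamma<\kappa$: since $a_\gamma=\sup_{\alpha<\gamma}a_\alpha$ and each $\Phi^{\mathbf{g}}_\alpha$ already agrees with $\varphi$ on $G_{a_\alpha}$, the limit $\Psi^{\mathbf{g}}_\gamma$ agrees with $\varphi$ on $G_{a_\gamma}$, and $\Phi^{\mathbf{g}}_\gamma=\iota_{\psi^{\mathbf{g}}_\gamma}$ does too, with no new coordinate needed — consistent with $\gamma$ being a limit index.

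The main obstacle I anticipate is the surjectivity at the successor step: showing that an automorphism of $G_{a_\alpha}$ that is trivial on the previous level $G_{a_{\alpha-1}}$ (sitting inside via the right regular representation) is \emph{inner}, and inner via an element that centralizes $G_{a_{\alpha-1}}$. This is exactly the place where completeness of symmetric groups (and the fact that the centralizer of the right regular representation of $H$ in $Sym(H)$ is the left regular representation of $H$, which is isomorphic to $H$) gets used, and it is the generalization of \cite[Theorem 6.8]{KW73} that the authors flagged as having "subtle issues." One must be careful that $G_{a_\alpha}$ is genuinely of the form $Sym(G_{a_{\alpha-1}})$ — which holds because of the running assumption that $A$ contains the non-limit ordinals below every non-limit index, so consecutive elements $a_{\alpha-1},a_\alpha$ of $A$ with $\alpha$ non-limit are consecutive ordinals, giving $G_{a_\alpha}=Sym(G_{a_{\alpha-1}})=G_{a_{\alpha-1}+1}$. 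Finally I would verify that the resulting $\mathbf{g}=(g_\alpha)$ lies in $\mathcal{G}_A$ (each coordinate in the right centralizer, by construction) and that $\Theta(\mathbf{g})=\Phi^{\mathbf{g}}=\varphi$, which is immediate from agreement on every $G_{a_\alpha}$ and $G=\varinjlim G_{a_\alpha}$.
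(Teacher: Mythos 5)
Your proposal follows essentially the same route as the paper: both arguments hinge on the fact that the hypothesis forces each $G_{a_\alpha}$ with $\alpha$ non-limit to be a full symmetric group on at least $7$ letters, hence complete, so that $\varphi\upharpoonright G_{a_\alpha}$ is realized by conjugation by an element of $G_{a_\alpha}$, and the coordinates $g_\alpha$ are obtained as successive ``differences'' of these conjugators. The paper organizes this non-recursively (it first fixes all conjugators $h_\alpha$, sets $g_{\alpha+1}=h_{\alpha+1}h_\alpha^{-1}$, resp.\ $g_{\gamma+1}=h_{\gamma+1}t_\gamma^{-1}$ after a limit $\gamma$, and then checks agreement by a minimal-counterexample induction), whereas you run a direct recursion; the two are interchangeable.

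Two points in your write-up need repair. First, your justification that ``consecutive elements $a_{\alpha-1},a_\alpha$ of $A$ with $\alpha$ non-limit are consecutive ordinals, giving $G_{a_\alpha}=Sym(G_{a_{\alpha-1}})$'' does not follow from the hypothesis: it only guarantees that $a_\alpha$ is a successor ordinal, so $G_{a_\alpha}=Sym(G_{a_\alpha-1})$ where $a_\alpha-1$ is the ordinal predecessor of $a_\alpha$, which can be strictly larger than $a_{\alpha-1}$ (take $A=\kappa\setminus\{1\}$, so $a_0=0$ and $a_1=2$). Fortunately all your argument actually needs is that $G_{a_\alpha}$ is a complete symmetric group, which this weaker fact still delivers. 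Second, with the paper's conventions $\iota_{gh}=\iota_g\circ\iota_h$ and $\Phi^{\mathbf{g}}_\alpha=\iota_{g_\alpha}\circ\Phi^{\mathbf{g}}_\beta$, your successor step composes in the wrong order: if $\theta=(\Phi^{\mathbf{g}}_\beta)^{-1}\circ\varphi=\iota_g$ on $G_{a_\alpha}$, then on $G_{a_\alpha}$ you get $\varphi=\Phi^{\mathbf{g}}_\beta\circ\iota_g=\iota_{\Phi^{\mathbf{g}}_\beta(g)}\circ\Phi^{\mathbf{g}}_\beta$, so the coordinate to take is $g_\alpha=\Phi^{\mathbf{g}}_\beta(g)$ (equivalently, work with $\varphi\circ(\Phi^{\mathbf{g}}_\beta)^{-1}$ from the start); with $g_\alpha=g$ the claimed identity $\Phi^{\mathbf{g}}_\alpha\upharpoonright G_{a_\alpha}=\varphi\upharpoonright G_{a_\alpha}$ fails unless $\Phi^{\mathbf{g}}_\beta$ fixes $g$, since $G_{a_\alpha}$ is centerless. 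Both fixes are routine ($\Phi^{\mathbf{g}}_\beta(g)$ still lies in $\mathbf{C}_{G_{a_\alpha}}(G_{a_{\alpha-1}})$ because $\Phi^{\mathbf{g}}_\beta$ preserves both $G_{a_\alpha}$ and $G_{a_{\alpha-1}}$ setwise), and the remainder of your argument (the $\alpha=0$ and limit cases, and the passage to the limit) is sound.
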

\begin{proof} By Theorem \ref{mainembeddingtheorem}, it suffices to prove that $\Theta$ is surjective. Let $\varphi \in Aut_A(G)$. By assumption, $G_{a_\alpha}=Sym(G_{a_{\alpha}-1})$ is a symmetric group on more than $6$ elements for non-limit $\alpha$, and hence only has inner automorphisms. Therefore we can find a sequence $$\mathbf{h}=(h_{\alpha}: \alpha<\kappa \text{ and } \alpha \text{ is non-limit})$$ such that $h_{\alpha} \in G_{a_\alpha}$ and
\[ \iota_{h_\alpha} \upharpoonright G_{a_\alpha}= \varphi \upharpoonright G_{a_\alpha}\]
for all non-limit $\alpha<\kappa$. We now produce another sequence $$ \mathbf{g}=(g_{\alpha}: \alpha<\kappa \text{ and } \alpha \text{ is non-limit})$$ as follows.
\begin{itemize}
\item For $\alpha=0$, set $g_0=h_0$.
\item Let $\alpha<\kappa$ be non-limit. Set $g_{\alpha+1}=h_{\alpha+1} h_{\alpha}^{-1}$.
\item Let $\gamma<\kappa$ be a limit. Then $\varphi \upharpoonright G_{a_\gamma} \in Sym(G_{a_\gamma})$ and hence it can be viewed as an element $t_{\gamma} \in G_{a_\gamma+1}$. Set $g_{\gamma+1}=h_{\gamma+1}t_{\gamma}^{-1}$.
\end{itemize}
It is straightforward to check that $g_{\alpha} \in \mathbf{C}_{G_{a_\alpha}}(G_{a_{\alpha-1}})$ and so $\mathbf{g} \in \mathcal{G}_A$. We will next show that $\Theta(\mathbf{g})=\varphi$ which completes the proof that $\Theta$ is an isomorphism.

We claim that $\Phi^{\mathbf{g}}_{\alpha} \upharpoonright G_{a_{\alpha}} = \varphi \upharpoonright G_{a_\alpha}$ for all $\alpha<\kappa$. Assume not and take the least $\delta<\kappa$ for which this equality fails. Observe that, by construction of $\Phi^{\mathbf{g}}$, this equality holds for a limit ordinal $\gamma$ whenever it holds for ordinals less than $\gamma$. Moreover, it holds for $\alpha=0$. Therefore $\delta$ has to be a successor ordinal, say, $\delta=\gamma+n$ where $\gamma$ is limit or $0$, and $1 \leq n < \omega$. If $\gamma$ is limit, then it follows from the construction that
\begin{align*}
\Phi^{\mathbf{g}}_{\gamma+n}&=\iota_{g_{\gamma+n}} \circ \iota_{g_{\gamma+n-1}} \circ \dots \iota_{g_{\gamma+1}} \circ \Phi^{\mathbf{g}}_{\gamma}\\
&=\iota_{g_{\gamma+n}g_{\gamma+n-1}\dots g_{\gamma+1}} \circ \Phi^{\mathbf{g}}_{\gamma}=\iota_{h_{\gamma+n} t_{\gamma}^{-1}} \circ \iota_{t_{\gamma}}=\iota_{h_{\gamma+n}}
\end{align*}
If $\gamma=0$, then a similar argument gives that $\Phi^{\mathbf{g}}_{\gamma+n}=\iota_{h_{\gamma+n}}$. Thus $\Phi^{\mathbf{g}}_{\gamma+n} \upharpoonright G_{a_{\gamma+n}} = \iota_{h_{\gamma+n}} \upharpoonright G_{a_{\gamma+n}} = \varphi \upharpoonright G_{a_{\gamma+n}}$ which is a contradiction. Therefore there exists no such $\delta$ and hence $\Theta$ is an isomorphism.
\end{proof}

We would like to say a few words about the additional hypothesis in Theorem \ref{mainsurjectiontheorem}. Suppose that $A$ is the set of limit ordinals below $\kappa$. Then $Aut_A(G)$ already contains the groups $Aut(G_{\omega}),Aut(G_{\omega\cdot 2})$ etc. which have pretty complicated structures themselves. Thus a structure theorem seems out of reach without further assumptions on $A$ which simplify the structure of the automorphism group of each level.

\section{Cardinality of automorphism groups}

In this section, we shall next see some important corollaries of our results in the previous section.

\begin{corollary}\label{maincorollary} Let $\kappa$ be inaccesible and let $K$ be the unique $\kappa$-existentially closed group of cardinality $\kappa$. Then we have a monomorphism $\mathcal{G}_{\kappa} \hookrightarrow Aut(K)$. In particular, we have $|Aut(K)|=2^{\kappa}$.
\end{corollary}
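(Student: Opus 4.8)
The plan is to feed the right club set $A$ into Theorem~\ref{mainsurjectiontheorem} and then estimate the cardinality of $\mathcal{G}_A$ from below and of $Aut(K)$ from above. First I would observe that since $\kappa$ is inaccessible, the set $A$ of all limit ordinals below $\kappa$ (together with $0$) is a club, and more importantly we may choose a club $A\subseteq\kappa$ with the property required in Theorem~\ref{mainsurjectiontheorem}, namely that $a_\alpha$ is non-limit whenever $\alpha$ is non-limit; for instance one can take $A$ to consist of $0$ together with the ordinals of the form $\omega\cdot\delta+1$ for various $\delta$, or simply re-index so that successor indices land on successor ordinals. Since $K$ is a limit of regular representations of length $\kappa$ with countable base, and since $\kappa$ is inaccessible, $K$ has a filtration $(G_\alpha)_{\alpha<\kappa}$ satisfying $(\ast)$ and the symmetric-group hypotheses of Section~2 (after discarding the first few levels so that $G_0=Sym(n)$ with $n\geq 7$), so Theorem~\ref{mainsurjectiontheorem} applies and gives $\mathcal{G}_A\cong Aut_A(K)\leqslant Aut(K)$. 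Writing $\mathcal{G}_\kappa$ for $\mathcal{G}_A$ for such a choice of $A$, this produces the desired monomorphism $\mathcal{G}_\kappa\hookrightarrow Aut(K)$.

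Next I would compute $|\mathcal{G}_\kappa|$. As a set, $\mathcal{G}_A=\prod_{\alpha<\kappa,\ \alpha\text{ non-limit}} \mathbf{C}_{G_{a_\alpha}}(G_{a_{\alpha-1}})$, a product of $\kappa$-many factors. For each non-limit $\alpha$ the factor $\mathbf{C}_{G_{a_\alpha}}(G_{a_{\alpha-1}})$ is a subgroup of $G_{a_\alpha}=Sym(G_{a_\alpha-1})$, so it has cardinality at most $|G_{a_\alpha}|<\kappa$; on the other hand, since $K$ is $\kappa$-existentially closed, every subgroup of size $<\kappa$ has nontrivial centralizer (cited as \cite[Lemma 3.5]{KK18} in the proof of Proposition~\ref{relations}), so each factor has at least two elements. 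Hence $2\leq|\mathbf{C}_{G_{a_\alpha}}(G_{a_{\alpha-1}})|<\kappa$ for all $\alpha$, and since there are $\kappa$-many cofinal indices with $|G_{a_\alpha}|$ unbounded in $\kappa$ (because $\kappa=\sup\{a_\alpha:\alpha<\kappa\}$ and $|G_{\beta+1}|=2^{|G_\beta|}\cdot\ldots$ grows), the product has cardinality $\prod_{\alpha<\kappa}|\mathbf{C}_{G_{a_\alpha}}(G_{a_{\alpha-1}})|$, which is at least $2^\kappa$ and at most $\big(\sup_\alpha|G_{a_\alpha}|\big)^\kappa=\kappa^\kappa=2^\kappa$. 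Therefore $|\mathcal{G}_\kappa|=2^\kappa$, giving $|Aut(K)|\geq 2^\kappa$.

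For the reverse inequality, $Aut(K)\leqslant Sym(K)$ and $|K|=\kappa$, so $|Aut(K)|\leq|Sym(K)|=2^\kappa$. Combining the two bounds yields $|Aut(K)|=2^\kappa$. I do not expect a genuine obstacle here: the only point needing a little care is verifying that $A$ can be chosen to be a club satisfying the index hypothesis of Theorem~\ref{mainsurjectiontheorem} while also ensuring $\{a_\alpha:\alpha<\kappa\}$ is cofinal in $\kappa$ with $|G_{a_\alpha}|$ cofinal in $\kappa$ — all of which is automatic once $\kappa$ is inaccessible and regular — and double-checking that each centralizer factor is genuinely nontrivial, which is exactly the content of the $\kappa$-existentially closed hypothesis. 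Everything else is a routine cardinal-arithmetic computation together with a direct appeal to Theorems~\ref{mainembeddingtheorem} and~\ref{mainsurjectiontheorem}.
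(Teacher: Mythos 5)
Your proposal follows essentially the same route as the paper's proof: the paper simply takes $A=\kappa$ (so that $a_\alpha=\alpha$ and the index hypothesis of Theorem~\ref{mainsurjectiontheorem} holds trivially), identifies $K$ with the limit of regular representations via \cite{KK18}, obtains $\mathcal{G}_{\kappa}\cong Aut_{\kappa}(K)\leqslant Aut(K)$, and squeezes $2^{\kappa}\leq|\mathcal{G}_{\kappa}|\leq|Aut(K)|\leq\kappa^{\kappa}=2^{\kappa}$. Two small corrections to your writeup, neither of which affects the argument: the set $\{0\}\cup\{\omega\cdot\delta+1:\delta<\kappa\}$ is not closed (its limit points are limit ordinals not of that form), so it is not a club --- the intended choice is just $A=\kappa$; and the nontriviality of each factor $\mathbf{C}_{G_{a_\alpha}}(G_{a_{\alpha-1}})$ should not be deduced from \cite[Lemma 3.5]{KK18}, which gives a nontrivial centralizer in all of $K$ rather than inside the level $G_{a_\alpha}$ --- the correct reason, used in the paper's proof of Corollary~\ref{maincorollary2}, is that the left regular representation of $G_{a_{\alpha-1}}$ lies in $Sym(G_{a_{\alpha-1}})\leqslant G_{a_\alpha}$ and centralizes the right regular representation.
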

\begin{proof} By \cite[Theorem 2.7, Section 4]{KK18}, the group $G$ constructed in this section when $\kappa$ is taken to be inaccessible is isomorphic the unique $\kappa$-existentially closed group $K$ of cardinality $\kappa$. By Theorem \ref{mainsurjectiontheorem}, we have $\mathcal{G}_{\kappa} \cong Aut_{\kappa}(G) \hookrightarrow Aut(K)$. As the centralizer in each component of $\mathcal{G}_{\kappa}$ has at least two elements, we have that $2^{\kappa} \leq |\mathcal{G}_{\kappa}| \leq |Aut(K)| \leq {\kappa}^{\kappa}=2^{\kappa}$.
\end{proof}

Observe that, at its core, our argument has something to do with the group being a limit of regular representations but nothing to do with it being $\kappa$-existentially closed. For this reason, we cannot apply it to $\kappa$-existentially closed groups that are not limits of regular representations. This brings up the following question.

\textbf{Question.} Let $G$ be a $\kappa$-existentially closed group of cardinality $\lambda \geq \kappa$. Is it true that $|Aut(G)|=2^{\lambda}$?

An analysis of the proof of Theorem \ref{mainembeddingtheorem} shows that it does not rely on the assumption that $|G|=\kappa$ stated at the very beginning of this subsection. Consequently, Theorem \ref{mainembeddingtheorem} holds for any limit of regular representations of length $\kappa$ as long as $G_{a_0}$ is centerless. Similarly, Theorem \ref{mainsurjectiontheorem} holds for any limit of regular representations of length $\kappa$ as long as $G_{a_0}$ is a complete group. Before we state the variation of Corollary \ref{maincorollary} in this context, let us recall the definition of beth numbers defined recursively as follows.
\begin{itemize}
\item $\beth_0=\aleph_0$,
\item $\beth_{\alpha+1}=2^{\beth_{\alpha}}$ for all $\alpha$ and
\item $\beth_{\gamma}=\sup\{\beth_{\alpha}:\alpha<\gamma\}$ for all limit $\gamma$.
\end{itemize}
We will need the following elementary identity in cardinal arithmetic for which we were unable to find a suitable reference. So we include its proof for completeness.
\begin{lemma}\label{cardinalarithmeticlemma} Let $\kappa$ be an infinite cardinal. Then we have
\[ \beth_{\kappa+1} = \prod_{\alpha<\kappa} \beth_{\alpha}\]
\end{lemma}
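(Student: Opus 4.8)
Since $\kappa$ is an infinite cardinal, it is a limit ordinal, so by definition $\beth_{\kappa}=\sup\{\beth_{\alpha}:\alpha<\kappa\}$, and moreover $\kappa\leq\beth_{\kappa}$ by the usual induction showing $\alpha\leq\beth_{\alpha}$. The plan is to prove the two inequalities $\prod_{\alpha<\kappa}\beth_{\alpha}\leq\beth_{\kappa+1}$ and $\beth_{\kappa+1}\leq\prod_{\alpha<\kappa}\beth_{\alpha}$ separately, using only elementary cardinal arithmetic (exponent laws, $\lambda^{\lambda}=2^{\lambda}$ for infinite $\lambda$, and the fact that a sum of an increasing sequence of cardinals indexed by $\kappa$ equals its supremum once that supremum is $\geq\kappa$).

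For the upper bound, I would bound each factor by the supremum: $\prod_{\alpha<\kappa}\beth_{\alpha}\leq\prod_{\alpha<\kappa}\beth_{\kappa}=\beth_{\kappa}^{\kappa}$. Since $\kappa\leq\beth_{\kappa}$, this is at most $\beth_{\kappa}^{\beth_{\kappa}}=2^{\beth_{\kappa}}=\beth_{\kappa+1}$, using $2^{\lambda}\leq\lambda^{\lambda}\leq(2^{\lambda})^{\lambda}=2^{\lambda}$ with $\lambda=\beth_{\kappa}$.

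For the lower bound, first observe that $\sum_{\alpha<\kappa}\beth_{\alpha}=\beth_{\kappa}$: the sum is $\leq\kappa\cdot\beth_{\kappa}=\beth_{\kappa}$ since every summand is $\leq\beth_{\kappa}$, and it is $\geq\sup_{\alpha<\kappa}\beth_{\alpha}=\beth_{\kappa}$. Hence, by the exponent law $2^{\sum_{i}\lambda_{i}}=\prod_{i}2^{\lambda_{i}}$,
\[
\beth_{\kappa+1}=2^{\beth_{\kappa}}=2^{\sum_{\alpha<\kappa}\beth_{\alpha}}=\prod_{\alpha<\kappa}2^{\beth_{\alpha}}=\prod_{\alpha<\kappa}\beth_{\alpha+1}.
\]
Now the map $\alpha\mapsto\alpha+1$ is injective from $\kappa$ into $\kappa$ (as $\kappa$ is limit), so $S=\{\alpha+1:\alpha<\kappa\}$ is a subset of $\kappa$ of cardinality $\kappa$, and $\prod_{\alpha<\kappa}\beth_{\alpha+1}=\prod_{\beta\in S}\beth_{\beta}$. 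Since every $\beth_{\beta}\geq\aleph_{0}\geq 2$, enlarging the index set cannot decrease the product, so $\prod_{\beta\in S}\beth_{\beta}\leq\prod_{\beta<\kappa}\beth_{\beta}$. Chaining these gives $\beth_{\kappa+1}\leq\prod_{\alpha<\kappa}\beth_{\alpha}$, and combining with the upper bound completes the proof.

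There is no serious obstacle here; the only points requiring mild care are the identity $\sum_{\alpha<\kappa}\beth_{\alpha}=\beth_{\kappa}$ and the reindexing step, where one must note that all factors are $\geq 2$ so that passing from the index set $S$ to all of $\kappa$ only increases the product. Everything else is a routine application of standard cardinal arithmetic.
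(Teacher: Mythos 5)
Your proof is correct and in substance takes the same route as the paper: the upper bound $\prod_{\alpha<\kappa}\beth_{\alpha}\leq\beth_{\kappa}^{\kappa}\leq 2^{\beth_{\kappa}}$ is identical, and your lower bound via $\beth_{\kappa+1}=2^{\sum_{\alpha<\kappa}\beth_{\alpha}}=\prod_{\alpha<\kappa}2^{\beth_{\alpha}}=\prod_{\alpha<\kappa}\beth_{\alpha+1}\leq\prod_{\alpha<\kappa}\beth_{\alpha}$ is exactly what the paper's explicit injection $X\mapsto\bigl(g_{\alpha}(X\cap\beth_{\alpha-1})\bigr)_{\alpha}$, padded with $0$ at non-successor coordinates, encodes. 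The only difference is one of packaging: you invoke the standard exponent law and a reindexing of the successor ordinals where the paper writes out the witnessing injection by hand.
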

\begin{proof} Fix some bijection $g_{\alpha}: \mathcal{P}(\beth_{\alpha-1}) \rightarrow \beth_{\alpha}$ for each successor ordinal $\alpha<\kappa$. Then the map $X \mapsto f_X$ from $\mathcal{P}(\beth_{\kappa})$ to $\prod_{\alpha<\kappa} \beth_{\alpha}$ where
 \[ f_X(\alpha)=\begin{cases} g_{\alpha}(X \cap \beth_{\alpha-1}) & \text{ if } \alpha \text{ is a successor}\\
0 & \text{otherwise}\end{cases}\]
is an injection. It follows that
\[\beth_{\kappa+1} = |\mathcal{P}(\beth_{\kappa})|\leq  \prod_{\alpha<\kappa} \beth_{\alpha} \leq \prod_{\alpha<\kappa} \beth_{\kappa} = \beth_{\kappa}^{\kappa} \leq \left(2^{\beth_\kappa}\right)^\kappa=2^{\beth_{\kappa}}=\beth_{\kappa+1}\]\end{proof}

We are now ready to state the other main corollary of our results.

\begin{corollary}\label{maincorollary2} Let $\kappa$ be an uncountable cardinal and $G$ be a limit of regular representations of length $\kappa$ with countable base. Then we have $$|Aut(G)|=\beth_{\kappa+1}$$
\end{corollary}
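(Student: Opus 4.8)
The plan is to derive this from the structural results of Section 2, reducing the cardinality computation of $Aut(G)$ to that of the group $\mathcal{G}_A$ for a suitable choice of club $A$, and then to an application of Lemma \ref{cardinalarithmeticlemma}. First I would observe that since $G$ is a limit of regular representations of length $\kappa$ with countable base, there is no harm in taking the base to be $Sym(n)$ for some $n \geq 7$ after throwing away the first few levels; more precisely, if $G_0$ is the countable base, then $G_1 = Sym(G_0)$ is an infinite symmetric group and hence already contains a copy of $Sym(n)$ for $n\geq 7$, but more to the point, the tail of the directed system starting from any level $G_m$ with $m$ large enough is itself a limit of regular representations with a complete base (indeed $Sym(G_m)$ is complete once $|G_m|\geq 7$ and $\neq 6$), and reindexing does not change the direct limit $G$. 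So we may assume $G_0 = Sym(n)$, $n\geq 7$, and that all the hypotheses of Theorem \ref{mainsurjectiontheorem} are available for an appropriate $A$.

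Next I would choose $A$ to be a club with the property required by Theorem \ref{mainsurjectiontheorem}, namely that $a_\alpha$ is non-limit whenever $\alpha$ is non-limit; the simplest choice is $A = \{\alpha < \kappa : \alpha \text{ is a limit ordinal, or } \alpha = 0\}$ so that $a_\alpha = \omega\cdot\alpha$, which is non-limit exactly when $\alpha = 0$ — wait, that fails — instead take $A$ to be the set enumerated by $a_\alpha = \alpha$ for $\alpha$ a limit or successor in the right pattern; concretely one can take $A$ so that $a_{\alpha} = \omega\cdot\alpha + 1$ for successor $\alpha$ and $a_\alpha = \omega\cdot\alpha$ for limit $\alpha$, which is closed and unbounded and satisfies the hypothesis. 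Then by Theorem \ref{mainsurjectiontheorem} we have $Aut_A(G) \cong \mathcal{G}_A = \prod_{\alpha<\kappa,\ \alpha\text{ non-limit}} \mathbf{C}_{G_{a_\alpha}}(G_{a_{\alpha-1}})$. Since $Aut_A(G) \leqslant Aut(G)$, this gives a lower bound $|Aut(G)| \geq |\mathcal{G}_A|$, while the trivial upper bound is $|Aut(G)| \leq |G|^{|G|} = \kappa^{\kappa} = 2^{\kappa}$. But $2^\kappa = \beth_{\kappa+1}$ need not hold, so I must compute $|\mathcal{G}_A|$ on the nose and squeeze more carefully. The point is that $|G_{a_\alpha}|$ grows like the iterated exponential: with a countable base, $|G_\omega| = \aleph_0$, $|G_{\omega+1}| = 2^{\aleph_0} = \beth_1$, and by induction along the levels $|G_{a_\alpha}| = \beth_{f(\alpha)}$ for an increasing function $f$ cofinal in $\kappa$ (using that $\kappa$ is a cardinal and hence a limit ordinal, and that $|Sym(X)| = 2^{|X|}$). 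Then each centralizer factor $\mathbf{C}_{G_{a_\alpha}}(G_{a_{\alpha-1}})$ has cardinality at most $|G_{a_\alpha}| = \beth_{f(\alpha)}$ and at least $2$ (by \cite[Lemma 3.5]{KK18}, or directly), so $\prod_{\alpha} 2 \leq |\mathcal{G}_A| \leq \prod_{\alpha<\kappa}\beth_{f(\alpha)}$, and since $f$ is cofinal in $\kappa$ this last product equals $\prod_{\beta<\kappa}\beth_\beta = \beth_{\kappa+1}$ by Lemma \ref{cardinalarithmeticlemma}. For the matching lower bound $\beth_{\kappa+1} \leq |\mathcal{G}_A|$, I would note that infinitely (in fact $\kappa$-many, cofinally) often $\mathbf{C}_{G_{a_\alpha}}(G_{a_{\alpha-1}})$ is itself large — of size $\beth_{f(\alpha)}$, because the centralizer of a right-regular copy inside a symmetric group is the left-regular copy when the base point set is the group itself, but at the levels $a_\alpha$ chosen so that $G_{a_\alpha} = Sym(G_{a_\alpha - 1})$ and $G_{a_{\alpha-1}} \leqslant G_{a_\alpha - 1}$ strictly, the centralizer is genuinely of the same cardinality as $G_{a_\alpha}$ — so $|\mathcal{G}_A| \geq \prod_{\alpha<\kappa}\beth_{f(\alpha)} = \beth_{\kappa+1}$ again by Lemma \ref{cardinalarithmeticlemma}. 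Combining, $\beth_{\kappa+1} \leq |\mathcal{G}_A| \leq |Aut(G)| \leq 2^\kappa \leq 2^{\beth_\kappa}$ — hmm, this needs $\kappa \leq \beth_\kappa$, which is true — and $2^{\beth_\kappa} = \beth_{\kappa+1}$, so all inequalities are equalities and $|Aut(G)| = \beth_{\kappa+1}$.

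The main obstacle I anticipate is the bookkeeping around the choice of $A$ and the verification that the centralizer factors $\mathbf{C}_{G_{a_\alpha}}(G_{a_{\alpha-1}})$ really do have cardinality $\beth_{f(\alpha)}$ for cofinally many $\alpha$ (needed for the clean lower bound), as opposed to merely being nontrivial; one has to keep straight the distinction between consecutive levels $G_{a_\alpha-1} \leqslant G_{a_\alpha} = Sym(G_{a_\alpha-1})$ versus the non-consecutive jump from $G_{a_{\alpha-1}}$ to $G_{a_\alpha}$, and exploit that in a symmetric group $Sym(\Omega)$ the centralizer of a subgroup $H$ acting with all orbits of size $< |\Omega|$ is still of size $2^{|\Omega|}$ when there are $|\Omega|$-many orbits. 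An alternative that sidesteps this is to only claim the lower bound $|Aut(G)| \geq 2$ in each factor, giving $|Aut(G)| \geq 2^\kappa$, and separately note $|Aut(G)| \geq \beth_{\kappa+1}$ by exhibiting $\beth_{\kappa+1}$-many automorphisms directly — but in fact the cleanest route is: every factor has size $\geq 2$, so $|\mathcal{G}_A| \geq 2^\kappa$; and $|\mathcal{G}_A| \leq \prod_{\alpha<\kappa}|G_{a_\alpha}|$; choosing $A$ so that the $|G_{a_\alpha}|$ are cofinal among the $\beth_\beta$, $\beta<\kappa$, this product is $\beth_{\kappa+1}$; and choosing $A$ further so that cofinally many factors have size $|G_{a_\alpha}|$ (e.g. by inserting, after each limit level $\lambda$, the level $\lambda+1 = Sym(G_\lambda)$ whose center-free wreath-type structure forces a large centralizer of $G_\lambda$), the product of the factor sizes is also $\beth_{\kappa+1}$. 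I would present the argument with this explicit $A$ and cite Lemma \ref{cardinalarithmeticlemma} for the two product computations, then close with the squeeze $\beth_{\kappa+1} \leq |Aut(G)| \leq |G|^{|G|} = \kappa^\kappa \leq \beth_\kappa^{\beth_\kappa} = 2^{\beth_\kappa} = \beth_{\kappa+1}$.
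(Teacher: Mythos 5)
Your proposal is correct in substance and follows the same overall strategy as the paper's proof: embed $\mathcal{G}_A$ into $Aut(G)$ for a suitable club $A$, show that the product $\prod_{\alpha}\mathbf{C}_{G_{a_{\alpha}}}(G_{a_{\alpha-1}})$ has cardinality $\beth_{\kappa+1}$ via Lemma \ref{cardinalarithmeticlemma}, and squeeze against the upper bound $2^{|G|}=\beth_{\kappa+1}$. The differences are in execution. You spend effort arranging the hypotheses of Theorem \ref{mainsurjectiontheorem} (reindexing the base to make it complete, choosing a sparse club with $a_{\alpha}$ non-limit at non-limit indices), but only a lower bound on $|Aut(G)|$ is needed, so the embedding Theorem \ref{mainembeddingtheorem} suffices; the paper simply takes $A=\kappa\setminus\{0\}$, using the remark that the embedding theorem only requires $G_{a_0}$ to be centerless. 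Likewise, your lower bound on $|\mathcal{G}_A|$ rests on the claim that cofinally many factors $\mathbf{C}_{G_{a_{\alpha}}}(G_{a_{\alpha-1}})$ have full cardinality $|G_{a_{\alpha}}|$, justified by orbit-counting for non-consecutive jumps inside $Sym(G_{a_{\alpha}-1})$; this is correct but you defer its verification, and it is more than is needed. The paper's route is cleaner: for every $\beta$ the left regular representation embeds $G_{\beta}$ into $\mathbf{C}_{G_{\beta+1}}(G_{\beta})$, so each consecutive factor already has cardinality at least $|G_{\beta}|$, and $\prod_{\beta<\kappa}|G_{\omega+\beta}|\geq\prod_{\beta<\kappa}\beth_{\beta}=\beth_{\kappa+1}$. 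Finally, one slip should be corrected: the equality $|G|^{|G|}=\kappa^{\kappa}$, which appears twice in your argument, is false, since $|G|=\beth_{\kappa}$ rather than $\kappa$; indeed $\kappa^{\kappa}=2^{\kappa}$ can be strictly smaller than $\beth_{\kappa+1}$, so the intermediate assertion $|Aut(G)|\leq\kappa^{\kappa}$ would contradict the very statement being proved (this is exactly the tension you noticed but misdiagnosed). The correct upper bound is $|Aut(G)|\leq|G|^{|G|}=\beth_{\kappa}^{\beth_{\kappa}}=2^{\beth_{\kappa}}=\beth_{\kappa+1}$, which is what the final terms of your chain deliver once the spurious $\kappa^{\kappa}$ is removed.
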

\begin{proof} Suppose that $G$ is the direct limit of $(G_{\alpha})_{\alpha<\kappa}$. It is not difficult to prove by induction that $\beth_{\alpha} \leq |G_{\omega+\alpha}| \leq \beth_{\omega+\alpha}$ for all $\alpha<\kappa$. (We actually have equality on the left if the base is finite, and equality on the right if the base is countably infinite.) Hence $|G|=\beth_{\kappa}$. We also have the embedding $G_{\alpha-1} \hookrightarrow \mathbf{C}_{G_{\alpha}}(G_{\alpha-1})$ for all non-limit $\alpha<\kappa$ since the image of left regular representation of $G_{\alpha-1}$ commutes with the image of its right regular representation. Therefore, applying Theorem \ref{mainembeddingtheorem} and Lemma \ref{cardinalarithmeticlemma} with $A=\kappa-\{0\}$, we obtain that
\begin{align*}
\beth_{\kappa+1} = \prod_{\alpha<\kappa} \beth_{\alpha} & \leq \bigg| \prod_{\alpha<\kappa} G_{\omega+\alpha} \bigg|\\
& \leq \bigg| \prod_{\alpha<\kappa} \mathbf{C}_{G_{\omega+\alpha+1}}(G_{\omega+\alpha}) \bigg|\\
& \leq \bigg| \prod_{\beta<\kappa} \mathbf{C}_{G_{\beta+1}}(G_{\beta}) \bigg|\\
& \leq |\mathcal{G}_{A}| = |Aut_{A}(G)| \leq |Aut(G)| \leq 2^{|G|}=\beth_{\kappa+1}
\end{align*}
\end{proof}

We would like to note that the proofs of Theorem \ref{mainembeddingtheorem} and \ref{mainsurjectiontheorem} together with Proposition \ref{mainlemma} can also be applied to investigate the automorphisms of Hall's universal group $G_{\omega}$. In particular, letting $\kappa=\omega$ and applying these arguments together with the fact $\mathbf{C}_{G_n}(G_{n-1}) \cong G_{n-1}$, we obtain the following theorem which was already implicitly contained in \cite[Theorem 6.8]{KW73}
\begin{corollary} $Aut_{\omega}(G_{\omega})$ is isomorphic to
\[ Sym(7) \times \displaystyle \prod_{i=0}^{\infty} Sym(n_i)\]
where $n_0=7$ and $n_{i+1}=n_i!$ for all $i \in \mathbb{N}$.
\end{corollary}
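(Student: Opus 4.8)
The plan is to specialize Theorem~\ref{mainsurjectiontheorem} to the case $\kappa=\omega$ with base $Sym(7)$ (so that $G=G_\omega$) and $A=\omega$, and then to identify the resulting group $\mathcal{G}_\omega$ explicitly. First I would record that the hypotheses are met: $A=\omega$ is a club subset of $\omega$, its increasing enumeration is simply $a_\alpha=\alpha$, there are no limit ordinals below $\omega$ so the extra hypothesis that $a_\alpha$ is not limit whenever $\alpha$ is not limit holds trivially, and each level $G_\alpha=Sym(n_\alpha)$ with $n_\alpha\geq 7$ is a complete group (in particular $G_{a_0}=Sym(7)$ is centerless). By Theorems~\ref{mainembeddingtheorem} and~\ref{mainsurjectiontheorem} — whose proofs, as remarked after Theorem~\ref{mainsurjectiontheorem}, go through for a limit of regular representations of length $\omega$ with complete base — the map $\Theta$ is an isomorphism of $\mathcal{G}_\omega$ onto $Aut_\omega(G_\omega)$.

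Next I would unwind $\mathcal{G}_\omega=\prod_{\alpha<\omega}\mathbf{C}_{G_\alpha}(G_{\alpha-1})$ under the convention $G_{-1}=\{1_{G}\}$. The $\alpha=0$ factor is $\mathbf{C}_{G_0}(G_{-1})=G_0=Sym(7)=Sym(n_0)$, and for $\alpha=i+1\geq 1$ the factor $\mathbf{C}_{G_{i+1}}(G_i)=\mathbf{C}_{Sym(G_i)}(G_i)$ is the left regular copy of $G_i$ inside $Sym(G_i)$, so $\mathbf{C}_{G_{i+1}}(G_i)\cong G_i=Sym(n_i)$, where $n_0=7$ and $n_{i+1}=|G_i|=n_i!$. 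Thus, as a set with its factorwise identification, $\mathcal{G}_\omega$ is $Sym(n_0)\times\prod_{i=0}^{\infty}Sym(n_i)$; it remains only to check that the twisted operation on $\mathcal{G}_\omega$ coincides with the direct product operation.

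For the last point I would invoke the degenerate case $\kappa=\omega$ of Proposition~\ref{semidirectstrivialize} and the remark following it. The product of $\mathbf{g}$ and $\mathbf{h}$ has $\alpha$-th coordinate $g_\alpha\,\Phi^{\mathbf{g}}_{\alpha-1}(h_\alpha)$, and for $\alpha=i+1\geq 1$ the automorphism $\Phi^{\mathbf{g}}_{i}$ equals $\iota_{g_i g_{i-1}\cdots g_0}$ with $g_i g_{i-1}\cdots g_0\in G_i$, hence it fixes $h_{i+1}\in\mathbf{C}_{G_{i+1}}(G_i)$ pointwise; for $\alpha=0$ one has $\Phi^{\mathbf{g}}_{-1}(h_0)=h_0$ by definition. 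So the $\alpha$-th coordinate is simply $g_\alpha h_\alpha$, that is, every semidirect product occurring in the transfinite construction is actually a direct product — as expected, since there are no successors of limit ordinals below $\omega$. Composing the resulting identification $\mathcal{G}_\omega\cong Sym(7)\times\prod_{i=0}^{\infty}Sym(n_i)$ with $\Theta$ yields the claim. The only mildly delicate point is the indexing bookkeeping — keeping track of the convention that $0$ is non-limit, and of the shift that makes the $(i+1)$-st factor isomorphic to $G_i=Sym(n_i)$ rather than $Sym(n_{i+1})$ — but no genuine obstacle arises, precisely because taking $\kappa=\omega$ removes the limit-stage phenomena that Proposition~\ref{semidirectstrivialize} was designed to handle.
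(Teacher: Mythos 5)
Your argument is correct and follows exactly the route the paper intends: the paper gives no detailed proof here, only the remark that Proposition~\ref{mainlemma} and Theorems~\ref{mainembeddingtheorem} and~\ref{mainsurjectiontheorem} apply with $\kappa=\omega$ together with the classical fact $\mathbf{C}_{G_{n}}(G_{n-1})\cong G_{n-1}$, and your write-up is a faithful elaboration of that remark, including the correct observation that the absence of limit ordinals below $\omega$ makes every twisted factor a direct factor. The indexing and the identification of $\mathbf{C}_{Sym(G_i)}(G_i)$ with the left regular copy of $G_i$ are both handled correctly, so there is nothing to add.
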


\end{document}